\documentclass[a4paper]{amsart}

\usepackage{amsmath,amssymb,amsthm}
\usepackage[all]{xy}
\usepackage{eucal,here}
\usepackage[dvipdfmx]{graphicx}
\usepackage[dvipdfmx,colorlinks=true,backref=page]{hyperref}

\newtheorem{theorem}{Theorem}[section]
\newtheorem{proposition}[theorem]{Proposition}
\newtheorem{lemma}[theorem]{Lemma}
\newtheorem{corollary}[theorem]{Corollary}

\theoremstyle{definition}
\newtheorem{definition}[theorem]{Definition}

\newtheorem{problem}[theorem]{Problem}

\theoremstyle{remark}
\newtheorem{remark}[theorem]{Remark}

\numberwithin{equation}{section}

\newcommand{\N}{\mathbb{N}}
\newcommand{\Z}{\mathbb{Z}}

\newcommand{\R}{\mathbb{R}}

\newcommand{\TC}{\mathsf{TC}}

\newcommand{\cat}{\mathsf{cat}}
\newcommand{\cd}{\mathsf{cd}}
\newcommand{\secat}{\mathsf{secat}}
\newcommand{\wgt}{\mathsf{wgt}}

\SelectTips{cm}{}

\title[Topological complexity sequences of groups]{Topological complexity sequences of groups}

\author{Daisuke Kishimoto}
\address{Faculty of Mathematics, Kyushu University, Fukuoka, 819-0395, Japan}
\email{kishimoto@math.kyushu-u.ac.jp}

\author{Yuki Minowa}
\address{Faculty of Mathematics, Kyushu University, Fukuoka, 819-0395, Japan}
\email{y-minowa@math.kyushu-u.ac.jp}

\date{\today}

\subjclass[2020]{55M30, 55R35}

\keywords{topological complexity, classifying space, Milnor construction, cohomological dimension}

\begin{document}

\maketitle

\begin{abstract}
  We define the topological complexity sequence of a group as the sequence of topological complexities of its Milnor constructions. This sequence may be regarded as an intrinsic refinement of the topological complexity of a group and, unlike topological complexity itself, is meaningful for groups of infinite cohomological dimension. We show that the topological complexity sequence of every group of infinite cohomological dimension is weakly increasing and unbounded. We then estimate its growth and determine its asymptotic behavior for a finite group of even order.
\end{abstract}


\section{Introduction}\label{Introduction}

Topological complexity is a numerical homotopy invariant of a space, introduced by Farber \cite{F}, which quantifies instability in motion planning problems in a continuous setting. Since its introduction, topological complexity and its variants have been studied extensively. We now recall the definition. Let $X$ be a space. The \emph{topological complexity} $\TC(X)$ is defined as the minimal integer $n$ for which there exists an open cover $X\times X=U_0\cup\cdots\cup U_n$ such that the evaluation fibration
\begin{equation}
  \label{free fibration}
  X^{[0,1]}\to X\times X,\quad\ell\mapsto(\ell(0),\ell(1))
\end{equation}
admits a section over $U_i$ for each $i=0,\ldots,n$. If no such open cover exists, we set $\TC(X)=\infty$. This definition admits a natural interpretation in terms of motion planning. A path in $X$ represents a motion of a robot in the space $X$. Accordingly, the path space $X^{[0,1]}$ can be viewed as the space of all possible motions, and a section of the above fibration corresponds to a motion planner. The number of open sets required to cover $X\times X$ and admit local sections thus reflects the instability of motion planners in $X$.

Let $G$ be a (discrete) group. As in the case of the Lusternik-Schnirelmann category of a group, we define the topological complexity of $G$ by
\[
  \TC(G)=\TC(BG),
\]
where $BG$ denotes the classifying space of $G$. This is well defined by homotopy invariance. The topological complexity of a group is of particular interest and has been intensively studied in several contexts \cite{D1,D2,DS,EFMO,FGLO1,FGLO2,FM,Gr1,Gr2,IM,K1,K2,M}. However, $\TC(G)=\infty$ whenever $G$ has infinite cohomological dimension, which is less meaningful. To address this issue, we introduce a new invariant of a group $G$ that is related to $\TC(G)$ and is meaningful for every group $G$. For $n\ge 0$, let $E_nG$ denote the $(n+1)$-fold join of $G$. The group $G$ acts diagonally on $E_nG$, and this action is free. The $n$-th Milnor construction of $G$ is defined by
\[
  B_nG=E_nG/G.
\]
Hence, there is a sequence of spaces $*=B_0G\subset B_1G\subset B_2G\subset\cdots$, and the classifying space $BG$ is given as the colimit of this sequence. For $n\ge 1$, we define
\[
  \TC^{n}(G)=\TC(B_nG).
\]
By \cite[Theorem 4]{F} together with $\dim B_nG=n$, one has $\TC^{n}(G)\le 2n$ for all $n\ge 1$. Hence $\TC^{n}(G)$ is finite for every group $G$.

\begin{definition}
  The \emph{topological complexity sequence} of a group $G$ is $\{\TC^{n}(G)\}_{n\ge 1}$.
\end{definition}

Topological complexity $\TC(X)$ is closely related to the Lusternik-Schnirelmann category $\cat(X)$, and $\cat(X)$ can be defined in terms of the $A_\infty$-structure of $X$ \cite{G,I}. It is therefore a fundamental problem to clarify the relationship between $\TC(X)$ and the $A_\infty$-structure of $X$. For a group $G$, the $A_\infty$-structure of $BG$ is given by the Milnor constructions $B_nG$. From this viewpoint as well, topological complexity sequences play an important role.

In this paper, we study the growth of the topological complexity sequences of a finite group. We begin by considering monotonicity. Farber, Tabachnikov and Yuzvinsky \cite{FTY} proved that for $n\ne 1,3,7$, $\TC(\R P^n)$ equals the immersion dimension of $\R P^n$. By \cite{FTY,J}, $\TC(\R P^n)$ for $n\le 13$ is given as follows, where our topological complexity is the one in \cite{FTY} minus one.

\renewcommand{\arraystretch}{1.2}

\begin{table}[htbp]
  \centering
  \begin{tabular}{c|ccccccccccccc}
    $n$&$1$&$2$&$3$&$4$&$5$&$6$&$7$&$8$&$9$&$10$&$11$&$12$&$13$\\\hline
    $\TC(\R P^n)$&$1$&$3$&$3$&$7$&$7$&$7$&$7$&$15$&$15$&$16$&$16$&$18$&$22$
  \end{tabular}
\end{table}

\noindent In particular, $\TC(\R P^n)$ is weakly increasing with respect to $n$ for $n\le 13$. On the other hand, the immersion dimension of $\R P^n$ is weakly increasing with respect to $n$ and unbounded. Thus since $B_n\Z_2=\R P^n$, the topological complexity sequence of $\Z_2$ is weakly increasing and unbounded. We show that this property holds for every group of infinite cohomological dimension, without using geometric methods.

\begin{theorem}
  \label{main 1}
  The topological complexity sequence of every group of infinite cohomological dimension is weakly increasing and unbounded.
\end{theorem}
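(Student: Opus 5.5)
The plan has two independent parts. Unboundedness follows from a lower bound on the Lusternik--Schnirelmann category. Monotonicity needs a comparison argument between $B_nG$ and $B_{n+1}G$.

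\emph{Unboundedness.} Since $\cat(X)\le\TC(X)$, it suffices to show that $\cat(B_nG)\to\infty$.
\begin{enumerate}
\item By the Schwarz bound, the sectional category $\secat$ of the covering $E_nG\to B_nG$ is at most $\cat(B_nG)$.
\item By the Schwarz/Milnor characterization, $\secat(E_nG\to B_nG)\le k$ holds if and only if there is a $G$-map $E_nG\to E_kG$. Equivalently, the classifying map $B_nG\to BG$ factors up to homotopy through $B_kG$.
\item Since $E_nG$ is $(n-1)$-connected, the inclusion $B_nG\to BG$ is $n$-connected. Hence it induces injections $H^m(BG;M)\to H^m(B_nG;M)$ for all $m<n$ and every coefficient $G$-module $M$.
\item Because $\cd G=\infty$, for every $k$ there is a module $M$ and a degree $m>k$ with $H^m(G;M)\ne0$. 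Take $n>m$.
\item If $\cat(B_nG)\le k$, a nonzero class in $H^m(G;M)$ would pull back nontrivially to $B_nG$, yet factor through $H^m(B_kG;M)=0$, since $\dim B_kG=k<m$.
\end{enumerate}
This contradiction gives $\cat(B_nG)>k$, so $\cat(B_nG)\to\infty$ and hence $\TC^n(G)\to\infty$. The argument gives a lower bound of roughly $n-1$ along a subsequence, which is enough; combined with monotonicity it gives unboundedness of the whole sequence.

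\emph{Monotonicity.} The goal is $\TC(B_nG)\le\TC(B_{n+1}G)=:k$.
\begin{enumerate}
\item If $k\ge 2n$, the inequality is immediate from $\TC(B_nG)\le 2n$ (\cite[Theorem 4]{F}). So assume $k<2n$.
\item Use the Ganea-type characterization: $\TC(X)\le k$ if and only if the $(k+1)$-fold fiberwise join $P_k(X)\to X\times X$ of the path fibration \eqref{free fibration} admits a section.
\item Restrict a section for $B_{n+1}G$ to $B_nG\times B_nG$. The inclusion $i\colon B_nG\to B_{n+1}G$ induces a fiberwise map $P_k(B_nG)\to (i\times i)^*P_k(B_{n+1}G)$ over $B_nG\times B_nG$.
\item On fibers this map is the $(k+1)$-fold join of $\Omega i\colon\Omega B_nG\to\Omega B_{n+1}G$. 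Since $i$ is $n$-connected, $\Omega i$ is $(n-1)$-connected.
\item Joining $k+1$ copies raises the connectivity of the map to roughly $n-1+k$ (each join factor adds one). If this is at least $\dim(B_nG\times B_nG)=2n$, obstruction theory lifts the section to $P_k(B_nG)$.
\end{enumerate}

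\emph{Main obstacle.} The connectivity estimate in the last step only covers $k\gtrsim n+1$, whereas the reduction gave only $k<2n$. The hard case is therefore small $k$ relative to $n$. There I would first try to show that $k=\TC(B_{n+1}G)$ cannot be that small, using the lower bound $\TC\ge\cat$ with $\cat(B_{n+1}G)$ close to $n+1$, which the unboundedness argument provides in favourable degrees.

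If that is too weak for general $G$, I would instead exploit the specific cell structure $B_{n+1}G=B_nG\cup(\text{cells of dimension } n+1)$. The idea is to deform a motion planner on $B_{n+1}G\times B_{n+1}G$, restricted to $B_nG\times B_nG$, by cellular approximation of paths relative to endpoints. This uses the fact that the pair $(B_{n+1}G,B_nG)$ is $n$-connected, so paths can be pushed into $B_nG$ over the $(2n)$-dimensional base only up to a controlled obstruction. Pinning down that obstruction is where I expect the real work to lie.
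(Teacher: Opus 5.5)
Your unboundedness argument is correct, though it differs from the paper's, which obtains $\cat(B_nG)\ge n$ from the cup-length of the Berstein class $\mathfrak{b}^n\in H^n(BG;I^{\otimes n})$ rather than from Schwarz's factorization through $B_kG$. It even gives $\cat(B_nG)>k$ for \emph{all} $n>m$, not just along a subsequence. Your monotonicity mechanism is also the right one. The fiberwise-join obstruction argument compares fiber connectivity $n-1+k$ with base dimension $2n$. It is exactly the content of the Garc\'{i}a Calcines--Vandembroucq criterion $2\dim X\le r+q\TC(Y)-1$, which the paper invokes with $(q,r)=(1,n)$. Like that criterion, it needs precisely $k=\TC(B_{n+1}G)\ge n+1$.

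The gap is that you never establish $k\ge n+1$. You leave the case $k\le n$ open as the ``main obstacle''. Your proposed remedy offers only $\cat(B_{n+1}G)$ ``close to $n+1$'' ``in favourable degrees'', which is not enough: you need $\cat(B_{n+1}G)\ge n+1$ for every $n$. The cellular-deformation fallback is not an argument as it stands.

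The missing ingredient is the sharp equality $\cat(B_nG)=n$ for all $n$ when $\cd(G)=\infty$, which is the paper's Proposition~\ref{cat(B_nG)}. You can get it from your own factorization argument with two sharpenings:
\begin{enumerate}
\item The $n$-connected map $B_nG\to BG$ is injective on $H^n$ with local coefficients, not only on $H^m$ for $m<n$.
\item $\cd(G)=\infty$ forces $H^n(G;M)\ne 0$ for some $M$ in \emph{every} degree $n$, by dimension shifting through coinduced modules.
\end{enumerate}
Then $\cat(B_nG)\le n-1$ would make the classifying map factor through the $(n-1)$-dimensional $B_{n-1}G$, killing that class. With $\TC(B_{n+1}G)\ge\cat(B_{n+1}G)\ge n+1$ in hand, your obstruction-theoretic step closes the proof for every $n$. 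The case split at $k\ge 2n$ then becomes unnecessary.
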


\begin{remark}
  Let $G$ be a group of infinite cohomological dimension. By Theorem \ref{main 1}, we may regard $\TC^n(G)$ as approaching $\TC(G)=\infty$ as $n\to\infty$. This behavior fails in general for groups of finite cohomological dimension. By definition, $B_1\Z$ is a wedge of infinitely many copies of $S^1$, and for $n\ge 2$, $H^*(B_n\Z)$ is nontrivial only in degrees $0,1$ and $n$. Thus, using zero-divisors cup-length estimate \cite{F}, we obtain $\TC^n(\Z)\ge 2$ for all $n\ge 1$. On the other hand, by \cite{F}, $\TC(\Z)=1$. Hence, $\TC^n(\Z)$ does not approach $\TC(\Z)$ as $n\to\infty$.
\end{remark}

Let $G$ be a group of infinite cohomological dimension. For $n\ge 1$, define the growth function of the topological complexity sequence of $G$ by
\[
  \alpha_G\colon\N\to\Z,\quad n\mapsto|\{k\ge 1\mid\TC^k(G)\le n\}|.
\]
By Corollary \ref{TC^n} below, for every $n\ge 1$, one has
\begin{equation}
  \label{TC^n estimate}
  n\le\TC^n(G)\le 2n.
\end{equation}
It follows that $\alpha_G$ is well defined and satisfies
\begin{equation}
  \label{growth estimate}
  \left[\frac{n}{2}\right]\le\alpha_G(n)\le n,
\end{equation}
where $[x]$ denotes the integer part of $x$. By Theorem \ref{main 1}, $\alpha_G$ is weakly increasing. We next provide a sharper estimate of $\alpha_G$ when $G$ has even order, where every finite group has infinite cohomological dimension. For $n\ge 1$, let $\alpha(n)$ denote the sum of the digits in the dyadic expansion of $n$, and define
\[
  \beta(n)=\min\left\{2\left(\left[\frac{k}{4}\right]+\alpha\left(\left[\frac{k}{2}\right]\right)+\alpha\left(\left[\frac{k}{4}\right]+\alpha\left(\left[\frac{k}{2}\right]\right)\right)\right)-3\,\middle|\,k=n+1,\ldots,2n\right\}.
\]

\begin{theorem}
  \label{main 2}
  Let $G$ be a finite group of even order. Then for all $n\ge 1$,
  \[
    \left[\frac{n}{2}\right]\le\alpha_G(n)\le\beta(n).
  \]
\end{theorem}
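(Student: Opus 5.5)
The lower bound is already contained in \eqref{growth estimate}, which comes from $\TC^k(G)\le 2k$. So the work is the upper bound $\alpha_G(n)\le\beta(n)$.

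By Theorem \ref{main 1} the sequence is weakly increasing, so $\alpha_G(n)=\max\{k\mid\TC^k(G)\le n\}$. Hence it suffices to show that for every $k$ with $k\ge\beta(n)+1$ we have $\TC^k(G)>n$. Equivalently, I need a lower bound $\TC^k(G)\ge f(k)$ for some explicit increasing function $f$ such that $\beta(n)$ inverts it: whenever $\TC^k(G)\le n$, one gets $k\le\beta(n)$.

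Since $G$ has even order, it contains a subgroup $\Z_2$, and I would reduce to this subgroup. The inclusion induces a map $E_k\Z_2\to E_kG$, and passing to quotients gives a covering $E_kG/\Z_2\to B_kG$. The space $E_kG/\Z_2$ is a free $\Z_2$-quotient of a $(k-1)$-connected $k$-dimensional complex. By the usual classifying-map argument it therefore receives a map from $\R P^k=B_k\Z_2$ that is an equivalence through dimension $k$. This suggests comparing $\TC$ of $B_kG$ with $\TC$ of the finite cover. A cleaner route is to work directly with zero-divisor cup length in mod $2$ cohomology, or with a TC-weight. The class $x\in H^1(B_kG;\Z_2)$ is pulled back from $H^1(BG;\Z_2)$ only if $G$ has a $\Z_2$ quotient. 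So instead I would use the transfer: $H^*(BG;\Z_2)\to H^*(B\Z_2;\Z_2)$ has image containing suitable powers of $x$ (via Evens norm or Stiefel–Whitney classes of the regular representation). This gives a class $w=x^{2^m}$ or similar that survives in $H^*(B_kG)$ and whose zero-divisor $w\otimes1+1\otimes w$ has a long nonvanishing power, up to degree about $2k$.

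The shape of $\beta$, which involves $[k/4]$, $\alpha([k/2])$ and $\alpha$ applied again, strongly suggests the paper instead uses a known lower bound for $\TC$ (or $\cat$/$\secat$) of a map whose target is a projective-type space. This resembles the estimate $\TC(\R P^m)\ge 2^{\lceil\log_2(m+1)\rceil}-1$ or Davis's bounds via immersion dimension, where $\alpha$ appears through the nonvanishing of binomial coefficients $\binom{2n}{j}\bmod 2$. So the plan is as follows. First, establish $\TC^k(G)\ge\TC$-type bound for a class of weight $\ge$ something in $H^*(B_kG;\Z_2)$, using a representation-theoretic class of degree roughly $2\alpha(\cdot)$ coming from a faithful representation restricted to $\Z_2$. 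Second, compute the zero-divisor cup-length of the truncated polynomial algebra image, using Lucas's theorem, which produces digit sums $\alpha$. Third, invert: $\TC^k(G)\le n$ forces $k$ below the minimum over $n+1\le k\le 2n$ of the displayed expression.

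The main obstacle is the first step. One must find which classes of $H^*(BG;\Z_2)$ restrict nontrivially to $\Z_2$ and survive the truncation to $B_kG$, whose cohomology agrees with $BG$ only below degree $k$. I would first try an Euler class of an induced representation to produce such classes. I would then use the equivalence of $B_kG\to BG$ through degree $k-1$, together with the injectivity $H^k(BG)\to H^k(B_kG)$, to keep track of which products remain nonzero.
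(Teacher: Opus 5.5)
Your lower bound and your reduction of the upper bound, via monotonicity, to a lower bound for $\TC^k(G)$ are fine. The problem is that the mechanism you propose for that lower bound cannot produce $\beta$.

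The mod $2$ zero-divisor cup-length computation you describe (Lucas's theorem applied to $(x\otimes 1+1\otimes x)^m$ in $\Z_2[x]/(x^{k+1})^{\otimes 2}$) gives exactly $2^{r+1}-1$ for $2^r\le k<2^{r+1}$. So even for $G=\Z_2$ it yields only the trivial bound $\alpha_{\Z_2}(n)\le n$ at $n=2^{s+1}-1$. By contrast, the $k=32$ term in the definition of $\beta(31)$ is already $2(8+1+\alpha(9))-3=19$, and in general $\beta(n)\le n/2+O(\log n)$.

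Replacing $x$ by a class $w$ that restricts to $x^{2^m}$ makes this worse, because cup-length counts factors, not degrees. A nonzero $(w\otimes1+1\otimes w)^j$ of degree about $2k$ gives only $\TC\ge j\approx 2k/2^m$.

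The digit sums in $\beta$ do not come from binomial coefficients mod $2$ at all. They come from two inputs:
\begin{itemize}
\item Davis's nonimmersion theorem: $\R P^{2(k+\alpha(k)-1)}$ does not immerse in $\R^{4k-2\alpha(k)}$. This is a Brown--Peterson cohomology result, not a mod $2$ one.
\item The Farber--Tabachnikov--Yuzvinsky theorem that $\TC(\R P^m)$ equals the immersion dimension for $m\ne1,3,7$.
\end{itemize}
Together they give $\TC^{2(k+\alpha(k)-1)}(\Z_2)>4k-2\alpha(k)$. The bound $\beta$ then follows by choosing $k=\frac{n-\epsilon(n)}{2}+\alpha(n)$, checking $2k-\alpha(k)\ge n$ with Legendre's formula $\nu_2(a!)=a-\alpha(a)$, and using monotonicity.

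The passage from $G$ to an order-two subgroup is the second gap.
\begin{itemize}
\item \emph{Via cohomology it fails.} In degrees below $k$ every class of $H^*(B_kG;\Z_2)$ comes from $BG$. For $G=A_4$, which has even order, $H^1(B_kG;\Z_2)\cong\mathrm{Hom}(A_4,\Z_2)=0$ for $k\ge2$. So every mod $2$ zero-divisor of $B_kA_4$ has degree at least $2$, and the mod $2$ zero-divisor cup-length is at most $k$, which does not even improve on Corollary~\ref{TC^n}. When $x$ itself is available, you are back to the weak bound above.
\item \emph{Via the finite cover $E_kG/\Z_2$ it is unavailable.} There is no general inequality between $\TC$ of a finite cover and $\TC$ of the base that you could invoke, since a lifted path need not end at the prescribed point.
\end{itemize}

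What you are missing is the homotopy-theoretic inequality $\TC^k(\Z_2)\le\TC^k(G)$. The paper gets it from $\mathcal{D}$-topological complexity, using three facts:
\begin{itemize}
\item $\TC(\R P^k)=\TC^{\mathcal{D}}(\R P^k)$;
\item $\TC^{\mathcal{D}}\le\TC$;
\item $\TC^{\mathcal{D}}(B_kH)\le\TC^{\mathcal{D}}(B_kG)$ for any subgroup $H$ with diagonal stabilizers, i.e.\ $xzy^{-1}=z$ with $x,y\in H$ forces $x=y$; this holds automatically when $|H|=2$.
\end{itemize}
For the last inequality, write both sides as sectional categories of the coverings $E_kH\times_HE_kH\to B_kH\times B_kH$ and $E_kG\times_GE_kG\to B_kG\times B_kG$. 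Pull the second back along $B_kH\times B_kH\to B_kG\times B_kG$. Then retract onto the first using the $(H\times H)$-equivariant map $G\to H$, $ax_ib\mapsto ab$, built from the double coset decomposition.
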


Consequently, we can determine the asymptotic behavior of the growth function $\alpha_G$ when $G$ has even order.

\begin{corollary}
  \label{main 3}
  Let $G$ be a finite group of even order. Then
  \[
    \lim_{n\to\infty}\frac{\alpha_G(n)}{n}=\frac{1}{2}.
  \]
\end{corollary}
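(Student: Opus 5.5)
By Theorem \ref{main 2}, for all $n\ge 1$ we have
\[
  \frac{1}{n}\left[\frac{n}{2}\right]\le\frac{\alpha_G(n)}{n}\le\frac{\beta(n)}{n}.
\]
The left side tends to $\frac12$, so the plan is to show $\limsup_{n\to\infty}\beta(n)/n\le\frac12$ and then squeeze.

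To bound $\beta(n)$ from above it is enough to evaluate the expression defining $\beta$ at one admissible index. Take $k=n+1$ (or any fixed $k$ in $\{n+1,\ldots,2n\}$ with $k\le n+O(1)$). Put $m=[k/4]+\alpha([k/2])$. Then
\[
  \beta(n)\le 2\bigl(m+\alpha(m)\bigr)-3.
\]
Now use the elementary estimate $\alpha(j)\le \log_2 j+1$ for $j\ge1$, which holds because $j$ has at most $\log_2 j+1$ binary digits. This gives $\alpha([k/2])=O(\log n)$, so $m=k/4+O(\log n)=n/4+O(\log n)$. In particular $m\le n$ for large $n$, hence $\alpha(m)=O(\log n)$ as well. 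Therefore
\[
  \beta(n)\le 2\cdot\frac{n}{4}+O(\log n)=\frac n2+O(\log n),
\]
and dividing by $n$ gives $\limsup\beta(n)/n\le \frac12$.

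Combining the two sides, $\alpha_G(n)/n\to\frac12$. There is no real obstacle here. The only point needing care is keeping the floor functions and logarithmic digit-sum terms under control as $O(\log n)$, which follows from the bound $\alpha(j)\le\log_2 j+1$. One should also note that $\beta(n)$ is finite for every $n$, which is immediate since the minimum is over a nonempty finite set.
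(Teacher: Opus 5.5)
Your proposal is correct and is essentially the paper's argument. The paper squeezes $\alpha_G(n)$ between the lower bound $\frac{n-1}{2}$ from \eqref{growth estimate} and an upper bound of the form $\frac{n}{2}+O(\log n)$ coming from $\alpha_G(n)\le f(n+1)$ together with \eqref{f upper bound}. Here $f(n+1)$ is exactly the $k=n+1$ term you use to bound $\beta(n)$, and your digit-sum estimate $\alpha(j)\le\log_2 j+1$ is the same one the paper makes explicit.
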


In Section \ref{Groups with nontrivial center}, using the sectional category weight introduced by Farber and Grant \cite{FG}, we derive a lower bound for $\TC^n(G)$ when $G$ contains a central $p$-subgroup. However, unlike the case of finite groups with even order, this bound does not determine the asymptotic behavior of the growth function $\alpha_G(n)$.


\subsection*{Acknowledgement}

The authors are grateful to Norio Iwase and John Oprea for useful comments, especially concerning Proposition \ref{cat(B_nG)}. They also thank the anonymous referee for pointing out an error in an earlier version of the paper. The authors were supported by JSPS KAKENHI Grant Numbers JP22K03284 (Kishimoto) and JP26KJ0241 (Minowa).


\section{Monotonicity}

In this section, we prove Theorem \ref{main 1} by using Lusternik-Schnirelmann category.

Let $X$ be a space. Recall that the \emph{Lusternik-Schnirelmann category} $\cat(X)$ is defined as the minimal integer $n$ such that there exists an open cover $X=U_0\cup\cdots\cup U_n$, where the inclusion $U_i\to X$ is null-homotopic for each $i=0,\ldots,n$. If no such open cover exists, we set $\cat(X)=\infty$. By \cite[Theorem 5]{F}, if $X$ is path-connected and paracompact, then
\begin{equation}
  \label{TC-cat}
  \cat(X)\le\TC(X)\le 2\cdot\cat(X).
\end{equation}
We remark that our normalization of $\TC(X)$ and $\cat(X)$ differs from \cite{F}. Specifically, our normalization is the one used in \cite{F} minus one, which has become standard in recent work.

Let $G$ be a group. Recall that the \emph{cohomological dimension} of $G$, denoted by $\cd(G)$, is defined as the largest integer $n$ such that $H^n(G;M)\ne 0$ for some $G$-modules $M$. It is known \cite{B,DR,S} that there exists a cohomology class $\mathfrak{b}\in H^1(BG;I)$, called the Berstein class, with the property that $\cd(G)$ coincides with the largest integer $n$ for which the $n$-th power $\mathfrak{b}^n\in H^n(BG;I^{\otimes n})$ is nontrivial. Here, $I$ denotes the augmentation ideal of the group ring $\Z G$, regarded as twisted coefficients via the natural $G$-action. In particular, if $\cd(G)=\infty$, then $\mathfrak{b}^n\ne 0$ for all $n\ge 1$.

\begin{proposition}
  \label{cat(B_nG)}
  Let $G$ be a group. Then for every $n\ge 1$,
  \[
    \cat(B_nG)=\min\{n,\cd(G)\}.
  \]
\end{proposition}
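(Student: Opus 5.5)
We prove the two inequalities $\cat(B_nG)\le\min\{n,\cd(G)\}$ and $\cat(B_nG)\ge\min\{n,\cd(G)\}$ separately.

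\emph{Upper bound.} First, $B_nG$ is a connected CW complex of dimension $n$: $E_nG$ is an $n$-dimensional (simplicial) complex with a free cellular $G$-action. Hence the dimension bound for LS category gives $\cat(B_nG)\le n$. Next assume $d=\cd(G)<n$. By the Eilenberg--Ganea theorem (using Stallings--Swan when $d=1$), there is a model of $BG$ with dimension $\max\{d,3\}$, and hence $\cat(BG)=d$. The inclusion $B_nG\to BG$ is $n$-connected, because $E_nG$ is $(n-1)$-connected. I would then apply the Berstein--Schwarz / Dranishnikov--Rudyak characterization: for a complex $X$ with fundamental group $G$, $\cat(X)\le k$ holds when $k\ge\dim X$, or when the classifying map $X\to BG$ factors through the $k$-skeleton and related conditions are met. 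The simplest route is to use Ganea fibrations. The map $B_nG\to BG$ lifts to the $d$-th Ganea space $G_d(BG)$, since $\cat(BG)=d$ means $G_d(BG)\to BG$ has a section. Then $\cat(B_nG)\le d$.

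\emph{Lower bound.} Let $k=\min\{n,\cd(G)\}$. The power $\mathfrak{b}^k\in H^k(BG;I^{\otimes k})$ is nonzero by the quoted property of the Berstein class. Because $B_nG\to BG$ is $n$-connected and $k\le n$, the restriction $H^k(BG;M)\to H^k(B_nG;M)$ is injective for any local system $M$. Therefore the pullback of $\mathfrak{b}$ to $B_nG$ has nonzero $k$-th power. The twisted cup-length bound then gives $\cat(B_nG)\ge k$.

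\emph{Main obstacle.} The delicate point is the upper bound when $\cd(G)<n$, especially for $\cd(G)\in\{1,2\}$, where Eilenberg--Ganea may fail. My plan is to avoid geometric models entirely and use obstruction theory. The Ganea fibration $G_d(BG)\to BG$ has $(d-1)$-connected fiber, namely an iterated join of $G$. The primary obstruction to a section over an arbitrary complex $X\to BG$ is the pullback of $\mathfrak{b}^{d+1}=0$, which is the Schwarz/Berstein result. Higher obstructions vanish as well when $\dim X\le 2d$ or by the Dranishnikov--Rudyak argument. I would need to check the dimension range $n$ versus $2d$. If obstructions persist in higher degrees, I would instead use directly that $\cat(BG)=\cd(G)$ (Eilenberg--Ganea for $d\ge3$, Stallings--Swan for $d=1$, and Dranishnikov's lifting argument for $d=2$) together with naturality of the section along $B_nG\to BG$.
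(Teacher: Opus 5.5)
Your lower bound (the Berstein class pulled back along the $n$-equivalence $B_nG\to BG$) and the bound $\cat(B_nG)\le\dim B_nG=n$ are correct and match the paper. The gap is the upper bound $\cat(B_nG)\le d$ when $d=\cd(G)<n$.

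\emph{Why the lifting step fails.} A section of $G_d(BG)\to BG$ only gives a section of the pulled-back fibration $B_nG\times_{BG}G_d(BG)\to B_nG$. But $\cat(B_nG)\le d$ means that the Ganea fibration $G_d(B_nG)\to B_nG$ of $B_nG$ \emph{itself} has a section. The natural comparison map goes from $G_d(B_nG)$ to the pullback, not back. So ``naturality of the section along $B_nG\to BG$'' proves nothing, since once $\cat(BG)\le d$ every map into $BG$ lifts to $G_d(BG)$. For instance, the projection $S^1\times S^m\to S^1$ is $m$-connected and $\cat(S^1)=1$, yet $\cat(S^1\times S^m)=2$.

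What makes the claim true for $B_nG$ is that its dimension equals the connectivity $n$ of $B_nG\to BG$. Your argument never uses this. The obstruction-theoretic sketch does not close the gap either: the higher obstructions are left unresolved, the range $\dim\le 2d$ you mention misses the case $n>2d$, and the fallback is the same invalid step. A minor point: a $3$-dimensional model of $BG$ does not give $\cat(BG)=d$ for $d\le 2$. There you need the Eilenberg--Ganea theorem $\cat(BG)=\cd(G)$ itself.

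\emph{The missing step.} Compare the two Ganea fibrations over $B_nG$.
On fibres, the map $G_d(B_nG)\to B_nG\times_{BG}G_d(BG)$ is the $(d+1)$-fold join $(\Omega B_nG)^{*(d+1)}\to G^{*(d+1)}$ of the map $\Omega B_nG\to G$. That map is $(n-1)$-connected, because its homotopy fibre is $\Omega E_nG$.
Joining a $c$-connected map with a nonempty space gives a $(c+1)$-connected map. So for $d\ge 1$ the fibre map is $n$-connected, and its homotopy fibre is $(n-1)$-connected.
Hence the obstructions to lifting the pulled-back section to $G_d(B_nG)$ lie in $H^{j+1}(B_nG;-)$ with $j\ge n$. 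These groups vanish because $\dim B_nG=n$, which gives $\cat(B_nG)\le d$.
This is the LS-category counterpart of the criterion from \cite{GCV} that the paper uses in the proof of Theorem~\ref{main 1}. The paper itself avoids the argument by citing \cite[Corollary 1.3]{C}, which gives $\cat(B_nG)=\cat(G)$ when $\cat(G)<n$, combined with Eilenberg--Ganea.
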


\begin{proof}
  Suppose that $\cd(G)\ge n$. Let $\mathfrak{b}\in H^1(BG;I)$ be the Berstein class. Then its $n$-th power $\mathfrak{b}^n\in H^n(BG;I^{\otimes n})$ is nontrivial. The natural map $i\colon B_nG\to BG$ is an $n$-equivalence, and hence the induced map $i^*\colon H^n(BG;I^{\otimes n})\to H^n(B_nG;I^{\otimes n})$ is injective. It follows that $i^*(\mathfrak{b})^n\ne 0$, and therefore the cup-length of $B_nG$ is at least $n$. Since the Lusternik-Schnirelmann category is bounded below by the cup-length, we obtain $\cat(B_nG)\ge n$. On the other hand, $\cat(B_nG)\le\dim B_nG=n$, and hence
  \[
    \cat(B_nG)=n
  \]
  for $\cd(G)\ge n$. By \cite[Corollary 1.3]{C}, one has $\cat(B_nG)=\cat(G)$ for $\cat(G)<n$, and Eilenberg and Ganea proved in \cite{EG} that $\cat(G)=\cd(G)$. Hence
  \[
    \cat(B_nG)=\cd(G)
  \]
  for $\cd(G)<n$.
\end{proof}

\begin{corollary}
  \label{TC^n}
  Let $G$ be a group. Then for every $n\ge 1$,
  \[
    \min\{n,\cd(G)\}\le\TC^n(G)\le 2\min\{n,\cd(G)\}.
  \]
\end{corollary}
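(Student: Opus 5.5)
The plan is to combine Proposition \ref{cat(B_nG)} with the general inequality \eqref{TC-cat} between topological complexity and Lusternik--Schnirelmann category. Fix $n\ge 1$ and apply \eqref{TC-cat} to $X=B_nG$. We have $\TC^n(G)=\TC(B_nG)$ by definition, and Proposition \ref{cat(B_nG)} gives $\cat(B_nG)=\min\{n,\cd(G)\}$. Substituting these into \eqref{TC-cat} yields exactly the claimed chain
\[
  \min\{n,\cd(G)\}\le\TC^n(G)\le 2\min\{n,\cd(G)\}.
\]

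The only thing to check is that the hypotheses of \eqref{TC-cat} hold for $B_nG$, namely that it is path-connected and paracompact.

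\emph{Path-connectedness.} For $n\ge 1$ the join $E_nG=G^{*(n+1)}$ of $n+1\ge 2$ nonempty spaces is path-connected. Indeed, any two points are joined through the cone lines of the join: both can be connected to points lying in a common join factor, and points of different factors are joined by the segments of the join. Hence the quotient $B_nG$ is path-connected as well.

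\emph{Paracompactness.} The Milnor construction $E_nG$ can be given a $G$-CW structure with free $G$-action. Then $B_nG$ is a CW complex of dimension $n$, and CW complexes are paracompact. If one prefers the Milnor (strong) topology on the join instead of the CW topology, the two are homotopy equivalent. Both $\TC$ and $\cat$ are homotopy invariants, so this does not affect the values. This topological bookkeeping is the only point needing any care, and it is routine. The rest is a direct substitution.
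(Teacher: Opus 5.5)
Your proposal is correct and is exactly the paper's argument: substitute $\cat(B_nG)=\min\{n,\cd(G)\}$ from Proposition \ref{cat(B_nG)} into \eqref{TC-cat} applied to $X=B_nG$. Your check that $B_nG$ is path-connected and paracompact is a reasonable addition that the paper leaves implicit.
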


\begin{proof}
  The claim follows from Proposition \ref{cat(B_nG)} together with \eqref{TC-cat}.
\end{proof}

\begin{lemma}
  \label{Ganea fibration}
  For every group $G$ and $n\ge 0$, the sequence $E_nG\to B_nG\to BG$ is equivalent to the $n$-th Ganea fibration of $BG$.
\end{lemma}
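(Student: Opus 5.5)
We argue by induction on $n$, using the standard characterization of the Ganea fibrations via fiberwise joins. Recall that for a space $X$ with path fibration $P_0\colon PX\to X$ (fiber $\Omega X$), the $n$-th Ganea fibration $G_n(X)\to X$ is the $(n+1)$-fold fiberwise join of $PX\to X$ over $X$. Its fiber is the $(n+1)$-fold join $\Omega X^{*(n+1)}$. The plan is to identify $B_nG\to BG$ with the $(n+1)$-fold fiberwise join of a fibration equivalent to the path fibration of $BG$.

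\textbf{Step 1: a model for the path fibration.} Take $BG=EG/G$ with $EG=E_\infty G$ (Milnor's model). Consider the map
\[
p_n\colon E_nG\times_G EG\to EG/G=BG,
\]
where $G$ acts diagonally on $E_nG\times EG$ and $p_n$ is induced by the projection to $EG$. Since $E_nG\times EG\to E_nG$ is a $G$-equivariant map with contractible fibers between free $G$-spaces, the projection $E_nG\times_G EG\to E_nG/G=B_nG$ is a homotopy equivalence. Under this equivalence, $p_n$ corresponds to the natural map $B_nG\to BG$. Moreover, $p_n$ is a fiber bundle with fiber $E_nG$, and this identifies the fiber sequence $E_nG\to B_nG\to BG$.

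\textbf{Step 2: fiberwise join.} Since $E_nG=G^{*(n+1)}$ with the diagonal action, the associated bundle functor $(-)\times_G EG$ commutes with joins: $(F_1*F_2)\times_G EG$ is the fiberwise join over $BG$ of $F_1\times_G EG$ and $F_2\times_G EG$. This holds because the join is a quotient of $F_1\times F_2\times[0,1]$ and the quotient commutes with the free $G$-action by local triviality. Hence $p_n$ is the $(n+1)$-fold fiberwise join of $p_0\colon G\times_G EG=EG\to BG$. The map $p_0$ is the universal cover. With fiber $G\simeq\Omega BG$ and contractible total space, it is fiber homotopy equivalent to the path fibration $PBG\to BG$.

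\textbf{Step 3: conclusion.} Fiberwise join preserves fiber homotopy equivalences between fibrations over the same base (the fiberwise homotopy invariance of the homotopy pushout defining it). Therefore $p_n$ is fiber homotopy equivalent to the $(n+1)$-fold fiberwise join of $PBG\to BG$, which is the $n$-th Ganea fibration. Combined with Step 1, this yields the claimed equivalence of $E_nG\to B_nG\to BG$ with $\Omega BG^{*(n+1)}\to G_n(BG)\to BG$.

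The main obstacle is Step 2, together with the compatibility in Step 3. One must check that the strict fiberwise join, applied to the strict bundles $F\times_G EG$, agrees with the homotopy-invariant fiberwise join used to define Ganea fibrations. I would handle this by noting that all maps involved are Hurewicz fibrations, since they are bundles over a CW complex. For such fibrations the strict fiberwise join is already a homotopy pushout, by a Mather cube argument.
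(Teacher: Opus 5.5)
Your argument is correct, but it takes a different route from the paper's.

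\textbf{The paper's route.} It works with Ganea's original inductive description, in which $G_{n+1}X$ is built from the cofiber of the fiber inclusion $F_n\to G_nX$. It identifies $E_{n+1}G=E_nG\star G$ with the Dold--Lashof construction on the covering $E_nG\to B_nG$ (via Gajer), which yields a homotopy cofibration $E_nG\to B_nG\to B_{n+1}G$. It then uses the pullback of the contractible $EG$ to see that $E_{n+1}G\to B_{n+1}G\to BG$ is a homotopy fibration, and induction on $n$ finishes.

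\textbf{Your route.} You use the fiberwise-join description of the Ganea fibrations and replace $B_nG$ by the Borel construction $E_nG\times_G EG$. You then observe that the associated-bundle functor turns $G^{\star(n+1)}$ into the $(n+1)$-fold fiberwise join of the universal cover $EG\to BG$, which is fiber homotopy equivalent to $PBG\to BG$.

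\textbf{What each buys.} Your approach treats all $n$ at once and gives an honest fiber homotopy equivalence. It avoids Gajer's homeomorphism, and it avoids the check left implicit in the paper's induction: that $B_{n+1}G\to BG$ agrees with the extension of $B_nG\to BG$ over the cone used in Ganea's construction. This check is not vacuous at $n=0$, where $\Sigma G$ is a wedge of circles. On the other hand, your approach relies on the theorem that Ganea's cofiber construction agrees with iterated fiberwise joins. It also does not directly produce the homotopy cofibration $E_nG\to B_nG\to B_{n+1}G$, which the paper extracts from this lemma and uses in the proof of Theorem~\ref{main 1}. You could recover it either from Ganea's theorem or directly, by noting that $B_{n+1}G=(E_nG\star G)/G$ is the mapping cone of the covering $E_nG\to B_nG$.

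\textbf{On the obstacle you flag.} It is milder than you suggest. The strict fiberwise join is functorial in fiberwise maps and fiberwise homotopies, so it preserves fiber homotopy equivalences directly; no Mather cube argument is needed for Step~3.
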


\begin{proof}
  The case $n=0$ is clear. Let $p\colon E\to B$ be a $G$-covering with $G$-action $\mu\colon E\times G\to E$. Recall from \cite{DL} that the Dold-Lashof construction $\mathrm{DL}(p)\colon\mathrm{DL}(E)\to\mathrm{DL}(B)$ is defined by
  \[
    \mathrm{DL}(E)=E\cup_\mu(CE\times G)\quad\text{and}\quad\mathrm{DL}(B)=B\cup_pCE,
  \]
  where $\mathrm{DL}(p)$ is obtained by gluing $p$ and the first projection $CE\times G\to CE$. The map $\mathrm{DL}(p)\colon\mathrm{DL}(E)\to\mathrm{DL}(B)$ is again a $G$-covering, and by \cite[Appendix B]{Ga}, $\mathrm{DL}(E)$ is $G$-homeomorphic to $E\star G$, where $E\star G$ denotes the join of $E$ and $G$ equipped with the diagonal $G$-action. Hence, $E_nG\to B_nG$ is isomorphic to the $n$-fold Dold-Lashof construction applied to the $G$-covering $G\to *$. In particular, there is a homotopy cofibration $E_nG\to B_nG\to B_{n+1}G$. On the other hand, there is a homotopy pullback
  \[
    \xymatrix{
      E_{n+1}G\ar[r]\ar[d]&EG\ar[d]\\
      B_{n+1}G\ar[r]&BG.
    }
  \]
  Since $EG$ is contractible, it follows that the sequence $E_{n+1}G\to B_{n+1}G\to BG$ is a homotopy fibration. Therefore the claim follows by induction on $n$.
\end{proof}

\begin{remark}
  In \cite[Appendix B]{Ga}, it is assumed that all topological groups are countable CW-groups. This assumption is made to simplify the treatment of product spaces, but it is unnecessary for discrete groups. Therefore, the result of \cite[Appendix B]{Ga} applies to arbitrary discrete groups.
\end{remark}

We now prove Theorem \ref{main 1}.

\begin{proof}
  [Proof of Theorem \ref{main 1}]
  Let $G$ be a group of infinite cohomological dimension. By Corollary \ref{TC^n}, the topological complexity sequence of $G$ is unbounded. By Lemma \ref{Ganea fibration}, there is a homotopy cofibration
  \[
    E_nG\to B_nG\to B_{n+1}G.
  \]
  Since $E_nG$ is homotopy equivalent to a wedge of $n$-spheres, the Blakers-Massey theorem implies that the natural map $B_nG\to B_{n+1}G$ is an $n$-equivalence. Garc\'{i}a Calcines and Vandembroucq \cite[Theorem 3]{GCV} showed that for $(q-1)$-connected spaces $X$ and $Y$ with $q\ge 1$, if there exists an $r$-equivalence $X\to Y$ and
  \[
    2\dim X\le r+q\TC(Y)-1,
  \]
  then $\TC(X)\le\TC(Y)$. By Corollary \ref{TC^n}, $\TC^{n+1}(G)\ge n+1$, and therefore
  \[
    2\dim B_nG=2n\le n+\TC^{n+1}(G)-1.
  \]
  Hence we can apply the above result to the case $X=B_nG$ and $Y=B_{n+1}G$ with $(q,r)=(1,n)$, and conclude that $\TC^n(G)\le\TC^{n+1}(G)$.
\end{proof}


\section{Estimate of the growth}

In this section, we estimate the growth function $\alpha_G(n)$ for a nontrivial group $G$ of even order using $\mathcal{D}$-topological complexity.

We begin by recalling $\mathcal{D}$-topological complexity, introduced by Farber, Grant, Lupton, and Oprea \cite{FGLO1,FGLO2}.

\begin{definition}
  The $\mathcal{D}$-topological complexity $\TC^\mathcal{D}(X)$ of a path-connected space $X$ is defined as the minimal integer $n$ for which there exists an open cover $X\times X=U_0\cup\cdots\cup U_n$ such that for each $i=0,\ldots,n$ and every choice of basepoint $u_i\in U_i$, the natural map $\pi_1(U_i,u_i)\to\pi_1(X\times X,u_i)$ has image contained in a subgroup conjugate to the diagonal subgroup of $\pi_1(X\times X,u_i)\cong\pi_1(X)\times\pi_1(X)$. If no such open cover exists, we set $\TC^\mathcal{D}(X)=\infty$.
\end{definition}

Recall that the \emph{sectional category}, or the Schwarz genus, of a map $f\colon X\to Y$, denoted by $\secat(f)$, is defined as the minimal integer $n$ for which there exists an open cover $Y=U_0\cup\cdots\cup U_n$ such that $f$ admits a homotopy section over each $U_i$ for $i=0,\ldots,n$. Since the end point fibration $X^{[0,1]}\to X\times X$ is, up to homotopy, identified with the diagonal map $X\to X\times X$, the topological complexity $\TC(X)$ equals the sectional category of the diagonal map $X\to X\times X$. By definition, sectional category satisfies the following properties.

\begin{proposition}
  \label{secat pullback}
  \begin{enumerate}
    \item For a homotopy pullback
    \[
      \xymatrix{
        X\ar[r]\ar[d]_f&Z\ar[d]^g\\
        Y\ar[r]&W,
      }
    \]
    there is an inequality
    \[
      \secat(f)\le\secat(g).
    \]

    \item If there is a homotopy commutative diagram
    \[
      \xymatrix{
        X\ar[r]^i\ar[d]^f&Z\ar[r]^p\ar[d]^g&X\ar[d]^f\\
        Y\ar[r]^j&W\ar[r]^q&Y
      }
    \]
    in which $p\circ i\simeq 1_X$ and $q\circ j\simeq 1_Y$, then
    \[
      \secat(f)\le\secat(g).
    \]
  \end{enumerate}
\end{proposition}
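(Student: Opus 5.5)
Both parts follow directly from the definition of sectional category. For (1), I take an open cover of $W$ that realizes $\secat(g)$ and pull it back along the bottom map. For (2), I pull back a cover of $W$ along $j$ and push sections forward with $p$. Throughout I assume, as is standard, that $\secat$ does not change when $f$ is replaced by a fibration. So I may take $f$ and $g$ to be fibrations and use genuine sections in place of homotopy sections.

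\textbf{Part (1).} Let $h\colon Y\to W$ denote the bottom map. After replacing $g$ by a fibration, the homotopy pullback property lets me assume that $f$ is fiber homotopy equivalent to the strict pullback $h^*g\to Y$. Let $W=V_0\cup\cdots\cup V_n$ be an open cover with $n=\secat(g)$ and sections $s_i\colon V_i\to Z$ of $g$. Set $U_i=h^{-1}(V_i)$; these are open and cover $Y$. On $U_i$, the map $y\mapsto(y,s_i(h(y)))$ is a section of the strict pullback. Composing it with the fiber homotopy equivalence gives a homotopy section of $f$ over $U_i$. Hence $\secat(f)\le n$.

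\textbf{Part (2).} Let $n=\secat(g)$, with cover $W=V_0\cup\cdots\cup V_n$ and homotopy sections $s_i\colon V_i\to Z$, meaning $g\circ s_i\simeq\mathrm{incl}_{V_i}$. Put $U_i=j^{-1}(V_i)$, which are open and cover $Y$. On $U_i$ define $\sigma_i=p\circ s_i\circ j|_{U_i}\colon U_i\to X$. Then
\[
f\circ\sigma_i=f\circ p\circ s_i\circ j\simeq q\circ g\circ s_i\circ j\simeq q\circ j|_{U_i}\simeq \mathrm{incl}_{U_i}.
\]
The three steps use, in order, homotopy commutativity of the right-hand square, the fact that $s_i$ is a homotopy section, and $q\circ j\simeq1_Y$. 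So $\sigma_i$ is a homotopy section of $f$ over $U_i$, and $\secat(f)\le n$.

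The left-hand square and $p\circ i\simeq 1_X$ are never used. They only record the retraction structure. The statement therefore still holds if only $q\circ j\simeq 1_Y$ and the right square are assumed.

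The only delicate point is that homotopies such as $f\circ p\simeq q\circ g$ are global, so they restrict to the subsets $V_i$ and $U_i$ without difficulty. Likewise, $q\circ j\simeq 1_Y$ restricts to $U_i$. For this reason I do not expect a genuine obstacle. The one step needing care is the replacement by fibrations in (1), so that a homotopy pullback can be treated as a strict one. This is the standard invariance of $\secat$ under fiber homotopy equivalence, and it follows from the homotopy lifting property.
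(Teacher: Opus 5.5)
Your proposal is correct. It is the direct check from the definition that the paper intends; the paper states both properties hold ``by definition'' and gives no further argument, and your remark that part (2) uses only the right-hand square and $q\circ j\simeq 1_Y$ is also correct.
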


For $G$-spaces $X$ and $Y$, let $X\times_GY$ denote the quotient of $X\times Y$ by the diagonal action of $G$. The following three propositions are proved in \cite{FGLO2}.

\begin{proposition}
  \label{DTC=secat}
  Let $X$ be a connected, locally path-connected and semi-locally simply-connected space with $\pi=\pi_1(X)$, and let $\widetilde{X}\to X$ be the universal covering. Then
  \[
    \TC^\mathcal{D}(X)=\secat(\widetilde{X}\times_\pi\widetilde{X}\to X\times X).
  \]
\end{proposition}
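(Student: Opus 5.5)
The plan is to identify the diagonal-subgroup condition in the definition of $\TC^\mathcal{D}(X)$ with the existence of local lifts through the covering $\widetilde{X}\times_\pi\widetilde{X}\to X\times X$. First I will check that this map is a covering. The group $\pi\times\pi$ acts freely on $\widetilde{X}\times\widetilde{X}$, and $X\times X=(\widetilde{X}\times\widetilde{X})/(\pi\times\pi)$. Since $\widetilde{X}\times_\pi\widetilde{X}$ is the quotient by the diagonal subgroup $\Delta\pi\subset\pi\times\pi$, the map
\[
  q\colon\widetilde{X}\times_\pi\widetilde{X}\to X\times X
\]
is the covering corresponding to the subgroup $\Delta\pi\subset\pi_1(X\times X)\cong\pi\times\pi$. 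Different choices of basepoint in a fibre change this subgroup only by conjugation. The hypotheses on $X$ (connected, locally path-connected, semi-locally simply-connected) are exactly what is needed for $\widetilde{X}$ to exist and for the classical lifting criterion to apply.

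Next, because $q$ is a covering and hence a fibration, $\secat(q)$ can be computed with strict local sections instead of homotopy sections. If $s_i\colon U_i\to X\times X$ is homotopic to the inclusion and lifts through $q$, then by the homotopy lifting property the inclusion itself lifts. So $\secat(q)$ is the least $n$ such that $X\times X$ has an open cover $U_0,\ldots,U_n$ where the inclusion of each $U_i$ lifts to $\widetilde{X}\times_\pi\widetilde{X}$.

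The core step is to compare, for an open set $U\subset X\times X$, the existence of a lift with the $\pi_1$-condition. Open subsets of $X\times X$ are again locally path-connected, so the lifting criterion applies to each path component $V$ of $U$ separately. For $u\in V$, the inclusion $V\to X\times X$ lifts with a prescribed lift of $u$ if and only if the image of $\pi_1(V,u)$ lies in $q_*\pi_1(\widetilde{X}\times_\pi\widetilde{X},\tilde u)$. As $\tilde u$ runs over the fibre above $u$, these subgroups run over all conjugates of $\Delta\pi$. Hence $V$ admits a lift exactly when the image of $\pi_1(V,u)$ is contained in some conjugate of the diagonal. Since the path components of $U$ are open and disjoint, lifts chosen on each component glue to a lift over all of $U$. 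The basepoint $u_i$ in the definition is arbitrary, which is the same as checking every path component. Together this gives the equivalence for each $U_i$, and therefore $\TC^\mathcal{D}(X)=\secat(q)$, including the case where both sides are $\infty$.

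I expect the only real obstacle to be this bookkeeping: handling non-connected $U_i$ and conjugacy, in particular making sure that "contained in a conjugate" matches a choice of point in the fibre above $u$.
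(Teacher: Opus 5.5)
Your proof is correct: identifying $\widetilde{X}\times_\pi\widetilde{X}\to X\times X$ as the (path-connected) covering attached to the conjugacy class of the diagonal subgroup, then replacing homotopy sections by strict ones via homotopy lifting, and then applying the lifting criterion separately on each path component of each $U_i$ is the right argument. The paper does not prove this statement itself; it quotes it from \cite{FGLO2}, where, to my recollection, it is obtained by essentially this same lifting-criterion argument.
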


\begin{proposition}
  \label{DTC<TC}
  For a connected, locally path-connected and semi-locally simply-connected space $X$, one has
  \[
    \TC^\mathcal{D}(X)\le\TC(X).
  \]
\end{proposition}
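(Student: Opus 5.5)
The plan is to compare both invariants with the sectional category of a single map and use the monotonicity of sectional category along a factorisation. By Proposition \ref{DTC=secat}, $\TC^\mathcal{D}(X)=\secat(q)$, where $q\colon\widetilde{X}\times_\pi\widetilde{X}\to X\times X$ is induced by $\widetilde{X}\times\widetilde{X}\to X\times X$. On the other hand, $\TC(X)=\secat(\Delta)$, where $\Delta\colon X\to X\times X$ is the diagonal; equivalently, $\TC(X)$ is the sectional category of the end point fibration $X^{[0,1]}\to X\times X$, as recalled above. So it suffices to show that $\Delta$ factors up to homotopy through $q$, i.e.\ that there is a map $s\colon X\to\widetilde{X}\times_\pi\widetilde{X}$ with $q\circ s\simeq\Delta$ (in fact we will get equality).

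The factorisation is the diagonal at the level of covers. The map $\widetilde{X}\to\widetilde{X}\times\widetilde{X}$, $x\mapsto(x,x)$, is $\pi$-equivariant for the diagonal action. It therefore descends to a map
\[
  s\colon X=\widetilde{X}/\pi\to\widetilde{X}\times_\pi\widetilde{X},\quad[x]\mapsto[x,x],
\]
and composing with $q$ gives $[x]\mapsto([x],[x])$. Thus $q\circ s=\Delta$ on the nose. Continuity of $s$ follows because $\widetilde{X}\to X$ is a quotient map, which holds for a covering projection.

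Next we transfer local sections. Suppose $X\times X=U_0\cup\cdots\cup U_n$ is an open cover with $n=\TC(X)$, where each $U_i$ admits a homotopy section $\sigma_i\colon U_i\to X$ of $\Delta$, that is, $\Delta\circ\sigma_i\simeq\mathrm{incl}_{U_i}$. Then $s\circ\sigma_i$ is a homotopy section of $q$ over $U_i$, since $q\circ s\circ\sigma_i=\Delta\circ\sigma_i\simeq\mathrm{incl}_{U_i}$. Hence $\secat(q)\le n$, and so $\TC^\mathcal{D}(X)\le\TC(X)$. If $\TC(X)=\infty$ there is nothing to prove.

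The only delicate point is the convention. The definition of $\secat$ above uses homotopy sections, whereas $\TC$ was defined using strict sections of the path fibration. We handle this by passing through the identification of the path fibration with $\Delta$ up to fibre homotopy equivalence, noting that homotopy sections and strict sections of a fibration give the same sectional category. Alternatively, we can argue directly: a continuous section $U_i\to X^{[0,1]}$ yields a homotopy from the first projection $U_i\to X$ to the second, so $\sigma_i=\mathrm{pr}_1$ is a homotopy section of $\Delta$. I expect this bookkeeping to be the main, though mild, obstacle; the hypotheses on $X$ are needed only so that $\widetilde{X}$ exists and Proposition \ref{DTC=secat} applies.
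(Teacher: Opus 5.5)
Your proof is correct. It takes a different route from the one the paper relies on.

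The paper gives no argument of its own here and cites \cite{FGLO2}. The usual justification works directly with the $\pi_1$-definition of $\TC^\mathcal{D}$. A motion planner $\sigma$ over an open set $U\subset X\times X$ gives the homotopy $((a,b),t)\mapsto(a,\sigma(a,b)(1-t))$ from the inclusion $U\hookrightarrow X\times X$ to a map into the diagonal. Hence the image of $\pi_1(U,u)\to\pi_1(X\times X,u)$ lies in a conjugate of the diagonal subgroup, the conjugation coming from the track of the basepoint under the homotopy.

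You instead pass through Proposition \ref{DTC=secat} and the strict factorisation $\Delta=q\circ s$. This turns the inequality into monotonicity of $\secat$ under composition. Your handling of strict versus homotopy sections, taking $\sigma_i=\mathrm{pr}_1$, is exactly the bookkeeping needed, and it is all that is required, since you only use $\secat(\Delta)\le\TC(X)$.

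Your version matches how the paper treats $\TC^\mathcal{D}$ elsewhere: Propositions \ref{DTC TC estimate} and \ref{DTC H<G} are both proved by comparing sectional categories of maps into products. The cost is that it depends on the nontrivial covering-space identification in Proposition \ref{DTC=secat}, which is where the hypotheses on $X$ come in. The definitional argument only uses how $\pi_1$ behaves under homotopies, and so gives $\TC^\mathcal{D}(X)\le\TC(X)$ for every path-connected $X$.
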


\begin{proposition}
  \label{DTC RP}
  For every $n\ge 1$,
  \[
    \TC^\mathcal{D}(\R P^n)=\TC(\R P^n).
  \]
\end{proposition}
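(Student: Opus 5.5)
By Proposition \ref{DTC<TC} we already have $\TC^\mathcal{D}(\R P^n)\le\TC(\R P^n)$, so the plan is to prove the reverse inequality $\TC(\R P^n)\le\TC^\mathcal{D}(\R P^n)$. The first step is to identify the right-hand side concretely. Here $\pi=\Z_2$, the universal cover is $S^n\to\R P^n$, and Proposition \ref{DTC=secat} gives
\[
  \TC^\mathcal{D}(\R P^n)=\secat(S^n\times_{\Z_2}S^n\to\R P^n\times\R P^n).
\]
The map $S^n\times_{\Z_2}S^n\to\R P^n\times\R P^n$ is the quotient of $S^n\times S^n$ by the diagonal $\Z_2$, mapping to the quotient by $\Z_2\times\Z_2$. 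Hence it is a $2$-fold covering, with deck group $(\Z_2\times\Z_2)/\Delta\cong\Z_2$. Its associated real line bundle is $\xi\otimes\xi$, the exterior tensor product of two copies of the Hopf line bundle $\xi$. By Schwarz's characterization of the genus of a double cover, $\secat$ of this covering is the least $k$ such that the Whitney sum $(k+1)(\xi\otimes\xi)$ admits a nowhere vanishing section. Equivalently, it is the least $k$ for which there is a map $s=(s_0,\dots,s_k)\colon S^n\times S^n\to\R^{k+1}\setminus\{0\}$ that is odd in each variable separately, i.e.\ $s(-u,v)=s(u,-v)=-s(u,v)$.

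The second step is to build, from such an $s$, an open cover of $\R P^n\times\R P^n$ by $k+1$ sets admitting continuous motion planners, following Farber--Tabachnikov--Yuzvinsky \cite{FTY}. Set $U_i=\{([u],[v])\mid s_i(u,v)\ne 0\}$. This condition is well defined because $s_i$ is odd in each variable. On $U_i$ the sign condition $s_i(u,v)>0$ selects, continuously, a pair of lifts $(u,v)$ up to a simultaneous sign change. We then move the line $[u]$ to $[v]$ by rotating $u$ to $v$ along the shortest great-circle arc. A simultaneous sign change gives the same path of lines, so the planner descends to $U_i$.

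The main obstacle is that this rotation is undefined when $v=-u$, that is, on the locus where $[u]=[v]$ but the selected lifts are antipodal. My first attempt is to modify $s$ before defining the planners, using FTY's normalization. I would homotope $s$ so that one coordinate $s_0$ equals $\langle u,v\rangle$ near the diagonal. Then on $U_0$ the chosen lifts satisfy $\langle u,v\rangle>0$ and are never antipodal. For the other $U_i$ I would shrink the sets, replacing $U_i$ by $U_i\cap\{\langle u,v\rangle\ne -1 \text{ for the selected lifts}\}$. The diagonal points removed in this way lie in $U_0$, so the shrunken sets together with $U_0$ still cover $\R P^n\times\R P^n$.

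If this normalization cannot be arranged in general, I would fall back on the explicit identification in \cite{FTY}. There, for $n\ne1,3,7$ both $\TC(\R P^n)$ and the sectional number $\min\{k\mid (k+1)(\xi\otimes\xi)\text{ has a nonvanishing section}\}$ are shown to equal the immersion dimension, via nonsingular maps $\R^{n+1}\times\R^{n+1}\to\R^{k+1}$. In the parallelizable cases $n=1,3,7$, I would check directly that both invariants equal $n$: the lower bound comes from zero-divisors cup-length, and the upper bound from group (or Cayley) multiplication giving a nonsingular bilinear map with $k=n$.
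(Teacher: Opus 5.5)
Your proposal is correct and is essentially the argument behind the paper's citation of \cite{FGLO2}: Proposition \ref{DTC=secat} and Schwarz's theorem \cite{S} identify $\TC^{\mathcal{D}}(\R P^n)$ with the least $k$ admitting a map $S^n\times S^n\to\R^{k+1}\setminus\{0\}$ odd in each variable (i.e.\ a nonsingular map $\R^{n+1}\times\R^{n+1}\to\R^{k+1}$), and \cite{FTY} shows that $\TC(\R P^n)$ equals exactly this number for every $n$, so the detour through immersion dimension and the case split $n\ne 1,3,7$ are unnecessary. Your self-contained first route also closes, since all it actually uses is $s_0(u,u)>0$: the map $[u]\mapsto s(u,u)/|s(u,u)|$ from $\R P^n$ to $S^k$ is null-homotopic when $k>n$, so a homotopy of sections arranges this; $k<n$ is impossible by Borsuk--Ulam; and $k=n$ forces $n\in\{1,3,7\}$, where $s(u,v)=\bar{u}v$ already has $s_0=\langle u,v\rangle$. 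The same Borsuk--Ulam bound, rather than the plain zero-divisor cup-length (which bounds $\TC$, not $\TC^{\mathcal{D}}$), is also what your parallelizable-case check needs.
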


Although it is not known whether $\TC^\mathcal{D}(B_nG)$ equals $\TC(B_nG)$ for all groups $G$ of infinite cohomological dimension, we have the following result.

\begin{proposition}
  \label{DTC TC estimate}
  Let $G$ be a group of infinite cohomological dimension, and let $n\ge 2$. Then
  \[
    \TC^\mathcal{D}(B_nG)\ge n\quad\text{and}\quad\TC^n(G)\le\TC^\mathcal{D}(B_nG)+1.
  \]
  Moreover, if $\TC^\mathcal{D}(B_nG)\ge n+1$, then
  \[
    \TC^n(G)=\TC^\mathcal{D}(B_nG).
  \]
\end{proposition}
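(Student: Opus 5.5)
We need to prove three things: $\TC^\mathcal{D}(B_nG)\ge n$, $\TC^n(G)\le\TC^\mathcal{D}(B_nG)+1$, and equality when $\TC^\mathcal{D}(B_nG)\ge n+1$. The plan is to compare both invariants with the sectional category of the fibre map of a fibration over $B_nG\times B_nG$. Since $n\ge 2$, the inclusion $B_nG\to BG$ is an $n$-equivalence, so $\pi_1(B_nG)=G$ and the universal cover of $B_nG$ is $E_nG$. By Proposition \ref{DTC=secat},
\[
  \TC^\mathcal{D}(B_nG)=\secat(E_nG\times_GE_nG\to B_nG\times B_nG).
\]
Meanwhile $\TC(B_nG)$ is the sectional category of the path fibration $P\to B_nG\times B_nG$. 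The path fibration factors through this covering-type map, via $P\to E_nG\times_G E_nG$, and the homotopy fibre of that intermediate map is $\Omega E_nG$. Here $E_nG$ is $(n-1)$-connected, being a wedge of $n$-spheres, so $\Omega E_nG$ is $(n-2)$-connected.

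\textbf{Step 1 (lower bound).} I would show $\TC^\mathcal{D}(B_nG)\ge n$ by a zero-divisor cup-length argument with twisted coefficients, the one used for $\cat$ in Proposition \ref{cat(B_nG)}. Take the Berstein class $\mathfrak{b}$ restricted to $B_nG$. The class $\mathfrak{b}\times 1-1\times\mathfrak{b}$, suitably formed with coefficients $I$, pulled back to $B_nG\times B_nG$, vanishes on $E_nG\times_GE_nG\simeq B_nG\times_{\text{diag}}$. This is the canonical class of \cite{FGLO2}: it restricts to zero along the ``diagonal-type'' covering. Its $n$-th power is nonzero because restriction to $B_nG\times *$ gives $\mathfrak{b}^n\ne 0$. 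The standard secat cup-length bound then gives $\secat\ge n$.

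Alternatively, and more simply, one can use $\secat$ monotonicity. The pullback of the map along $B_nG\to B_nG\times B_nG$, $x\mapsto(x,*)$, is $E_nG\to B_nG$. By Proposition \ref{secat pullback}(1), its secat is at least $\secat(E_nG\to B_nG)=\cat(B_nG)=n$ by Proposition \ref{cat(B_nG)}. I will use this route.

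\textbf{Step 2 (upper bound).} I would apply the standard fibrewise estimate: if $E'\to E\to B$ with the fibre of $E'\to E$ being $(s-1)$-connected and $B$ a CW complex of dimension $d$, then
\[
  \secat(E'\to B)\le\secat(E\to B)+\text{obstruction},
\]
in the following concrete form. Over each open set $U_i$ where $E\to B$ has a section, the lift to $E'$ exists on the $(s)$-skeleton. The obstruction to sectioning the path fibration over $B_nG\times B_nG$ lives in dimensions up to $2n$, while the fibre $\Omega E_nG$ is only $(n-2)$-connected, so a direct lift fails. Instead I would use the Schwarz/Farber–Grant type inequality
\[
  \secat(p)\le \frac{\dim B+1}{s+1}\ \text{-type bounds},
\]
or more precisely the join description. $\TC\le k$ iff the $(k+1)$-fold fibrewise join of the path fibration has a section. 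Comparing it to the $(k+1)$-fold fibrewise join of the covering map, the map between these fibrewise joins has fibre connectivity increased by about $k+1$ times. When $k=\TC^\mathcal{D}+1\ge n+1$, the connectivity of the comparison map exceeds $2n=\dim(B_nG\times B_nG)$, so the section lifts. This yields $\TC^n(G)\le\TC^\mathcal{D}(B_nG)+1$.

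Refining the same count with $k=\TC^\mathcal{D}(B_nG)\ge n+1$ should show the lift already exists without the extra $+1$. Together with Proposition \ref{DTC<TC}, this gives equality.

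\textbf{Main obstacle.} The main obstacle is the precise connectivity count for the map between iterated fibrewise joins, with fibres $\Omega E_nG$ versus discrete $G$. This determines exactly where the threshold $n+1$ appears. I would first compute the connectivity of the fibre of $*^{k+1}_B P\to *^{k+1}_B(E_nG\times_GE_nG)$ via the join connectivity formula $\mathrm{conn}(A*B)=\mathrm{conn}A+\mathrm{conn}B+2$, and then compare it with $2n$.
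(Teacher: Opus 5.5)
Your Step 1 follows the paper's argument. You pull back $E_nG\times_GE_nG\to B_nG\times B_nG$ to $E_nG\to B_nG$ and apply Proposition~\ref{secat pullback}. Your square, along $x\mapsto(x,*)$, is the one that actually has $E_nG$ in the corner; pulling back along the diagonal instead gives $E_nG\times_GG$ with $G$ acting by conjugation. One justification is missing. Proposition~\ref{cat(B_nG)} only gives $\cat(B_nG)=n$, and a priori you only have $\secat(E_nG\to B_nG)\le\cat(B_nG)$. The paper closes this with \cite[Proposition 45]{S}. Alternatively, the Berstein class restricts to zero on the simply connected space $E_nG$ while $\mathfrak{b}^n\ne0$ on $B_nG$, so the cohomological lower bound for sectional category gives $\secat(E_nG\to B_nG)\ge n$.

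The genuine gap is Step 2, which carries the whole content of the second and third assertions. You never derive the connectivity estimate, and the heuristic you propose for it is wrong. The homotopy fibre of the comparison map $P^{*(k+1)}\to(E_nG\times_GE_nG)^{*(k+1)}$ is not $(\Omega E_nG)^{*(k+1)}$. The rule $\mathrm{conn}(A*B)=\mathrm{conn}A+\mathrm{conn}B+2$ is about spaces, not maps. What you must control is the fibre map $\Omega B_nG\to G$ and its iterated joins. Since $G$ is discrete and nontrivial, each further join factor raises the connectivity of this map by only one. On cofibres, $f*A'$ has cofibre $\mathrm{cof}(f)\wedge SA'$, and the unreduced suspension of a disconnected $A'$ has an $S^1$ wedge summand. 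Your count would already give a $(3n-2)$-connected fibre for three factors, hence $\TC(B_nG)=\TC^\mathcal{D}(B_nG)$ for every $n\ge2$. The paper explicitly says that equality is not known, so your sketch proves more than is known.

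The correct count goes as follows. The map $\Omega B_nG\to G$ is $(n-1)$-connected, since its fibre $\Omega E_nG$ is $(n-2)$-connected. So the $(k+1)$-fold join map is $(n-1+k)$-connected. A section of the $(k+1)$-fold join of the covering therefore lifts over the $2n$-dimensional base $B_nG\times B_nG$ once $n-1+k\ge 2n$, that is, $k\ge n+1$. This gives $\TC(B_nG)\le\max\{\TC^\mathcal{D}(B_nG),n+1\}$. Together with $\TC^\mathcal{D}(B_nG)\ge n$ and $\TC^\mathcal{D}\le\TC$, this yields both remaining claims, with the threshold $n+1$ appearing exactly as stated. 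This is what the paper extracts from \cite[Theorem 7.10]{EFMO}, using only that the universal cover $E_nG$ is $(n-1)$-connected. You should either cite that result or carry out this count carefully.
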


\begin{proof}
  There is a homotopy pullback
  \[
    \xymatrix{E_nG\ar[r]\ar[d]&E_nG\times_GE_nG\ar[d]\\
    B_nG\ar[r]&B_nG\times B_nG}
  \]
  where the top and bottom maps are the diagonal maps. Suppose that $n\ge 2$. Then $E_nG\to B_nG$ is the universal covering. Hence, by Propositions \ref{secat pullback} and \ref{DTC=secat}, together with \cite[Proposition 45]{S},
  \[
    \TC^\mathcal{D}(B_nG)\ge\secat(E_nG\to B_nG)=\cat(B_nG)=n.
  \]
  Since the universal cover $E_nG$ of $B_nG$ is $(n-1)$-connected, it follows from \cite[Theorem 7.10]{EFMO} that $\TC^n(G)\le n+1$ if $\TC^\mathcal{D}(B_nG)=n$, and that $\TC^n(G)=\TC^\mathcal{D}(B_nG)$ if $\TC^\mathcal{D}(B_nG)\ge n+1$. Thus, together with the inequality $\TC^\mathcal{D}(B_nG)\ge n$, the remaining claims follow.
\end{proof}

\begin{remark}
  Let $G$ be a group, not necessarily of infinite cohomological dimension. Since $B_1G$ is homotopy equivalent to the wedge of $|G|-1$ copies of $S^1$, it follows from \cite[Proposition 2.5]{FGLO2} that
  \[
    \TC^1(G)=\TC^\mathcal{D}(B_1G).
  \]
  Moreover, a cup-length argument shows that
  \[
    \TC^1(G)=
    \begin{cases}
      0&(|G|=1)\\
      1&(|G|=2)\\
      2&(|G|\ge 3).
    \end{cases}
  \]
\end{remark}

Let $X$ and $Y$ be path-connected spaces. It is proved in \cite{FGLO2} that if there exists a map $X\to Y$ which is an isomorphism in $\pi_1$, then
\[
  \TC^\mathcal{D}(X)\le\TC^\mathcal{D}(Y).
\]
We improve this result in a special case. Let $G$ be a group, and let $H$ be a subgroup of $G$. We consider the action of $H\times H$ on $G$ given by
\begin{equation}
  \label{action}
  (x,y)\cdot z=xzy^{-1}
\end{equation}
for $(x,y)\in H\times H$ and $z\in G$. Note that all stabilizer subgroups of this action are contained in the diagonal subgroup of $H\times H$ if and only if for $x,y\in H$ and $z\in G$, $xzy^{-1}=z$ implies $x=y$. Hence if $H$ satisfies this condition, then we say that $H$ has \emph{diagonal stabilizers}. For example, if $H$ has order two, then it has diagonal stabilizers. Indeed, for any $z\in G$ and the nontrivial element $y\in H$, we have $zy\ne z$ and $yz\ne z$. Hence the claim follows.

\begin{proposition}
  \label{DTC H<G}
  Let $G$ be a group, and let $H$ be a subgroup of $G$ having diagonal stabilizers. Then
  \[
    \TC^\mathcal{D}(B_nH)\le\TC^\mathcal{D}(B_nG).
  \]
\end{proposition}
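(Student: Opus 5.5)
The approach is to realize $\TC^\mathcal{D}$ of both spaces as sectional categories of explicit coverings via Proposition \ref{DTC=secat}, pull the covering for $G$ back to $B_nH\times B_nH$, and then use the diagonal stabilizer hypothesis to map the pulled-back covering into the covering for $H$.

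\emph{The case $n=1$.} It is settled directly by the Remark preceding the definition of diagonal stabilizers, which computes $\TC^\mathcal{D}(B_1G)=\TC^1(G)$ to be $0$, $1$ or $2$ according as $|G|=1$, $|G|=2$ or $|G|\ge 3$.
If $H$ is trivial, there is nothing to prove. If $|H|=2$, then $|G|\ge 2$. If $|H|\ge 3$, then $|G|\ge 3$. In every case the inequality is immediate.

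\emph{The case $n\ge 2$: setup.} Here $E_nG$ and $E_nH$ are simply connected, since they are joins of at least three discrete sets. Hence $E_nG\to B_nG$ and $E_nH\to B_nH$ are universal coverings. By Proposition \ref{DTC=secat},
\[
\TC^\mathcal{D}(B_nG)=\secat(q_G),\qquad q_G\colon E_nG\times_GE_nG\to B_nG\times B_nG,
\]
and similarly for $H$. Note that $q_G$ is the covering $(E_nG\times E_nG)/\Delta G\to(E_nG\times E_nG)/(G\times G)$, with fibre $(G\times G)/\Delta G\cong G$. Under this identification $G\times G$ acts on the fibre by $(x,y)\cdot z=xzy^{-1}$.

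\emph{Pulling back.} The inclusion $H\subset G$ induces an $H$-equivariant inclusion $E_nH\subset E_nG$, and hence a map $B_nH\times B_nH\to B_nG\times B_nG$. Pull $q_G$ back along this map. Since $q_G$ is a covering, hence a fibration, this is a homotopy pullback. The pullback is the covering
\[
P=(E_nH\times E_nH)\times_{H\times H}G\to B_nH\times B_nH,
\]
where $H\times H$ acts on $G$ by \eqref{action}. Proposition \ref{secat pullback}(1) then gives $\secat(P\to B_nH\times B_nH)\le\secat(q_G)$.

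\emph{Using diagonal stabilizers.} Decompose $G$ into $H\times H$-orbits, namely the double cosets $HzH$. The orbit of $z$ is $(H\times H)/S_z$, where $S_z$ is the stabilizer of $z$. By the hypothesis, $S_z\subset\Delta H$; explicitly $S_z=\Delta K_z$ with $K_z=H\cap zHz^{-1}$. Hence
\[
P=\coprod_{z}(E_nH\times E_nH)/\Delta K_z .
\]
Each summand admits the quotient map $(E_nH\times E_nH)/\Delta K_z\to(E_nH\times E_nH)/\Delta H=E_nH\times_HE_nH$ over $B_nH\times B_nH$. Thus there is a map $P\to E_nH\times_HE_nH$ over the base. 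Composing a local (homotopy) section of $P$ over an open set with this map gives a local section of $q_H$. Therefore
\[
\TC^\mathcal{D}(B_nH)=\secat(q_H)\le\secat(P\to B_nH\times B_nH)\le\secat(q_G)=\TC^\mathcal{D}(B_nG).
\]

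The main obstacle is the identification of the pullback $P$ as the Borel-type quotient with fibre $G$ and the action \eqref{action}. One must carefully track left versus right actions so that the stabilizers come out exactly as $\{(x,y)\mid xzy^{-1}=z\}$; only then does the diagonal stabilizer hypothesis apply to make every summand cover $E_nH\times_HE_nH$.
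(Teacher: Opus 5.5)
Your proposal is correct and follows essentially the same route as the paper. You pull back $E_nG\times_GE_nG\to B_nG\times B_nG$ to $B_nH\times B_nH$ and then map it over the base onto $E_nH\times_HE_nH$; your orbitwise quotient maps $(H\times H)/\Delta K_z\to(H\times H)/\Delta H$ are exactly the paper's equivariant map $\rho(ax_ib)=ab$, and using a map over the base directly is all the paper's retraction diagram really uses. Your separate treatment of $n=1$ via the preceding Remark is a useful addition: for $n=1$ the covering $E_1G\to B_1G$ is in general not universal, so the paper's appeal to Proposition \ref{DTC=secat} implicitly needs $n\ge 2$.
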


\begin{proof}
  Consider the canonical left action of $G$ on itself. Then there is a (homotopy) pullback
  \[
    \xymatrix{
      E_nH\times_HG\ar[r]\ar[d]&E_nG\ar[d]\\
      B_nH\ar[r]&B_nG
    }
  \]
  where we identify $E_nG=E_nG\times_GG$. We now consider the right $G$-action on $E_nH\times_HG$ and form $(E_nH\times_HG)\times_G(E_nH\times_HG)$. Since the top map of the above diagram is $G$-equivariant, there is a commutative diagram
  \[
    \xymatrix{
      (E_nH\times_HG)\times_G(E_nH\times_HG)\ar[r]\ar[d]&E_nG\times_GE_nG\ar[d]\\
      B_nH\times B_nH\ar[r]&B_nG\times B_nG.
    }
  \]
  Since both the left and right vertical maps are covering maps with common fiber $G$, this diagram is a (homotopy) pullback. Hence, it follows from Propositions \ref{secat pullback} and \ref{DTC=secat} that
  \[
    \secat((E_nH\times_HG)\times_G(E_nH\times_HG)\to B_nH\times B_nH)\le\TC^\mathcal{D}(B_nG).
  \]
  Let $H\backslash G\slash H=\{x_i\}_{i\in I}$ be the double coset decomposition. Define a map $\rho\colon G\to H$ by
  \[
    \rho(ax_ib)=ab\quad(a,b\in H).
  \]
  Since $H$ has diagonal stabilizers, if $a,b,c,d\in H$ satisfy $ax_ib=cx_id$, then $ab=cd$, which implies that $\rho$ is well defined. By definition, $\rho$ is $(H\times H)$-equivariant, where $H\times H$ acts on $G$ and $H$ by \eqref{action}. Consider an $(H\times H)$-equivariant map
  \[
    \theta\colon G\times_HG\to G,\quad[(x,y)]\mapsto xy^{-1}
  \]
  which restricts to an $(H\times H)$-equivariant bijection $H\times_HH\to H$. Then the composite
  \[
    G\times_HG\xrightarrow{\theta}G\xrightarrow{\rho}H\xrightarrow{\theta^{-1}}H\times_HH
  \]
  is $(H\times H)$-equivariant. This map induces an $(H\times H)$-equivariant map
  \[
    (E_nH\times_HG)\times_G(E_nH\times_HG)\to (E_nH\times_HH)\times_H(E_nH\times_HH)=E_nH\times_HE_nH
  \]
  which restricts to the identity map of $E_nH\times_HE_nH$. Hence there is a commutative diagram
  \[
    \xymatrix{
      E_nH\times_HE_nH\ar[r]\ar[d]&(E_nH\times_HG)\times_G(E_nH\times_HG)\ar[r]\ar[d]&E_nH\times_HE_nH\ar[d]\\
      B_nH\times B_nH\ar@{=}[r]&B_nH\times B_nH\ar@{=}[r]&B_nH\times B_nH
    }
  \]
  such that the composite of the top maps is the identity. Thus by Propositions \ref{secat pullback} and \ref{DTC=secat}, we obtain
  \begin{align*}
    \TC^\mathcal{D}(B_nH)&=\secat(E_nH\times_HE_nH\to B_nH\times B_nH)\\
    &\le\secat((E_nH\times_HG)\times_G(E_nH\times_HG)\to B_nH\times B_nH).
  \end{align*}
  Therefore the proof is complete.
\end{proof}

\begin{corollary}
  \label{DTC Z_2}
  If a group $G$ has even order, then for every $n\ge 1$,
  \[
    \TC^n(\Z_2)\le\TC^n(G).
  \]
\end{corollary}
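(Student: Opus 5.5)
The plan is to realize $\Z_2$ as a subgroup of $G$ and chain together Propositions \ref{DTC RP}, \ref{DTC H<G} and \ref{DTC<TC}.

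Since $G$ has even order, Cauchy's theorem gives an element of order two in $G$. Let $H\cong\Z_2$ be the subgroup it generates. As observed just before Proposition \ref{DTC H<G}, any subgroup of order two has diagonal stabilizers. Hence Proposition \ref{DTC H<G} applies and gives
\[
  \TC^\mathcal{D}(B_n\Z_2)\le\TC^\mathcal{D}(B_nG).
\]
The functoriality involved is harmless: the inclusion $H\to G$ induces the map $B_nH\to B_nG$ used there, and $B_nH$ depends on $H$ only up to isomorphism.

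Next we use $B_n\Z_2=\R P^n$ together with Proposition \ref{DTC RP} to replace the left-hand side by $\TC(\R P^n)=\TC^n(\Z_2)$. Then Proposition \ref{DTC<TC}, applied to $X=B_nG$, gives $\TC^\mathcal{D}(B_nG)\le\TC(B_nG)=\TC^n(G)$. Chaining these,
\[
  \TC^n(\Z_2)=\TC^\mathcal{D}(\R P^n)\le\TC^\mathcal{D}(B_nG)\le\TC^n(G).
\]

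The only point needing care is the hypothesis of Proposition \ref{DTC<TC}: $B_nG$ must be connected, locally path-connected and semi-locally simply connected. This holds because $B_nG$ is a CW complex. It is connected for $n\ge1$, since $E_nG$ is connected for $n\ge 1$. For $n=1$ nothing changes, because Proposition \ref{DTC H<G} was stated for all $n$.
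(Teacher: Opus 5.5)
Your proposal is correct and is essentially the paper's proof: take a subgroup $H$ of order two, note that it has diagonal stabilizers, and chain $\TC^n(\Z_2)=\TC(\R P^n)=\TC^\mathcal{D}(\R P^n)\le\TC^\mathcal{D}(B_nG)\le\TC^n(G)$ using Propositions \ref{DTC RP}, \ref{DTC H<G} and \ref{DTC<TC}. Your explicit check that $B_nG$ satisfies the point-set hypotheses of Proposition \ref{DTC<TC} is a small addition that the paper leaves implicit.
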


\begin{proof}
  Since $G$ has even order, it has a subgroup $H$ of order two. Since $H$ has diagonal stabilizers as mentioned above, it follows from Propositions \ref{DTC<TC}, \ref{DTC RP} and \ref{DTC H<G} that
  \[
    \TC^n(\Z_2)=\TC(\R P^n)=\TC^\mathcal{D}(\R P^n)\le\TC^\mathcal{D}(B_nG)\le\TC^n(G)
  \]
  where $B_n\Z_2=\R P^n$.
\end{proof}

\begin{proof}
  [Proof of Theorem \ref{main 2}]
  As mentioned in Section \ref{Introduction}, Farber, Tabachnikov and Yuzvinsky \cite{FTY} proved that for $n\ne 1,3,7$, $\TC^n(\Z_2)$ equals the immersion dimension of $\R P^n$. On the other hand, Davis \cite{D} proved that for every $k\ge 1$, $\R P^{2(k+\alpha(k)-1)}$ cannot be immersed into $\R^{4k-2\alpha(k)}$. Hence
  \begin{equation}
    \label{Z_2 upper bound}
    \TC^{2(k+\alpha(k)-1)}(\Z_2)>4k-2\alpha(k).
  \end{equation}
  By Theorem \ref{main 1}, this implies that for every $k\ge 1$,
  \begin{equation}
    \label{alpha inequality}
    \alpha_{\Z_2}(4k-2\alpha(k))\le 2(k+\alpha(k)-1)-1.
  \end{equation}
  Let $\epsilon(n)$ denote the parity of $n$, and set $k=\frac{n-\epsilon(n)}{2}+\alpha(n)$. Note that
  \begin{equation}
    \label{alpha formula}
    \alpha(a)\le a,\quad\alpha(2a)=\alpha(a),\quad\alpha(2a+1)=\alpha(2a)+1,\quad\alpha(a+b)\le\alpha(a)+\alpha(b).
  \end{equation}
  Moreover, by Legendre's formula, one has $\nu_2(a!)=a-\alpha(a)$, where $\nu_2(a)$ denotes the dyadic valuation of $a$. Thus
  \begin{align*}
    2k-\alpha(k)&=2k-\alpha(2k)\\
    &=\nu_2((2k)!)\\
    &=\nu_2((n-\epsilon(n)+2\alpha(n))!)\\
    &=n-\epsilon(n)+2\alpha(n)-\alpha(n-\epsilon(n)+2\alpha(n))\\
    &\ge n-\epsilon(n)+2\alpha(n)-(\alpha(n)-\epsilon(n)+\alpha(2\alpha(n)))\\
    &=n+\alpha(n)-\alpha(\alpha(n))\\
    &\ge n.
  \end{align*}
  Since every finite group has infinite cohomological dimension, it follows from Theorem \ref{main 1} that the growth function $\alpha_{\Z_2}$ is weakly increasing. Therefore for every $n\ge 1$,
  \begin{align*}
    \alpha_{\Z_2}(2n)&\le\alpha_{\Z_2}(4k-2\alpha(k))\\
    &\le 2(k+\alpha(k)-1)-1\\
    &=n-\epsilon(n)+2\alpha(n)+2\alpha(n-\epsilon(n)+2\alpha(n))-3.
  \end{align*}
  By Corollary \ref{DTC Z_2}, $\alpha_G(2n)\le\alpha_{\Z_2}(2n)$, and by monotonicity, $\alpha_G(2n-1)\le\alpha_G(2n)$. Combining these inequalities, we obtain
  \[
    \alpha_G(n)\le f(n+1),
  \]
  where
  \[
    f(n)=2\left(\left[\frac{n}{4}\right]+\alpha\left(\left[\frac{n}{2}\right]\right)+\alpha\left(\left[\frac{n}{4}\right]+\alpha\left(\left[\frac{n}{2}\right]\right)\right)\right)-3.
  \]
  Since $\alpha(k)\le[\log_2k]+1$, it follows from \eqref{alpha formula} that
  \begin{align}
    \label{f upper bound}
    f(n)&\le 2\left(\left[\frac{n}{4}\right]+\alpha\left(\left[\frac{n}{2}\right]\right)+\alpha\left(\left[\frac{n}{4}\right]\right)+\alpha\left(\alpha\left(\left[\frac{n}{2}\right]\right)\right)\right)-3 \\
    &\le 2\left(\left[\frac{n}{4}\right]+2\alpha\left(\left[\frac{n}{2}\right]\right)+\alpha\left(\left[\frac{n}{4}\right]\right)\right)-3 \notag\\
    &\le 2\left(\left[\frac{n}{4}\right]+2\log_2\left[\frac{n}{2}\right]+2+\log_2\left[\frac{n}{4}\right]+1\right)-3 \notag\\
    &\le 2\left(\frac{n}{4}+2\log_2\frac{n}{2}+\log_2\frac{n}{4}+3\right)-3 \notag\\
    &=\frac{n}{2}+6\log_2n-5. \notag
  \end{align}
  Since $f(2n)\ge n$, we obtain $f(n)\le f(2n)$ for $n\ge 8$. By Table \ref{f and beta} below, we also have $f(n)\le f(2n)$ for $n\le 7$. Therefore by monotonicity of $\alpha_G(n)$, the proof is complete.
\end{proof}

\begin{remark}
  By definition, $\beta(n)\le f(n+1)$. This inequality is strict for infinitely many values of $n$.

  One can see that $\beta(n)$ is actually sharper than $f(n+1)$ from the following table.
  \begin{table}[htbp]
    \centering
    \caption{}\label{f and beta}
    \begin{tabular}{c|ccccccccccccc}
      $n$&$1$&$2$&$3$&$4$&$5$&$6$&$7$&$8$&$9$&$10$&$11$&$12$&$13$\\\hline
      $f(n+1)$&$1$&$1$&$3$&$3$&$7$&$7$&$7$&$7$&$7$&$7$&$11$&$11$&$13$\\
      $\beta(n)$&$1$&$1$&$3$&$3$&$7$&$7$&$7$&$7$&$7$&$7$&$11$&$11$&$11$
    \end{tabular}
  \end{table}
\end{remark}

\begin{proof}
  [Proof of Corollary \ref{main 3}]
  By \eqref{growth estimate} and \eqref{f upper bound}, we have
  \[
    \frac{n-1}{2}\le\alpha_G(n)\le\frac{n}{2}+6\log_2n-5.
  \]
  Therefore the claim follows.
\end{proof}


\section{Groups with central $p$-subgroup}\label{Groups with nontrivial center}

In this section, using sectional category weight, we provide a lower bound for $\TC^n(G)$ when $G$ contains a central $p$-subgroup.

Recall from \cite{FG} that the \emph{sectional category weight} $\wgt_f(u)$ of a cohomology class $u\in H^*(Y)$ with respect to a map $f\colon X\to Y$ is defined as the maximal integer $n$ such that for any map $g\colon Z\to Y$ satisfying $\secat(g^*f\colon W\to Z)<n$, one has $g^*(u)=0$. Here $g^*f\colon W\to Z$ denotes the homotopy pullback of $f$ along $g$. If no such integer exists, we set $\wgt_f(u)=\infty$. The choice of coefficients for cohomology is arbitrary. Note that if $i\colon x_0\to X$ is the basepoint inclusion of a path-connected space $X$, then $\wgt_i(x)$ coincides with the classical category weight. The sectional category weight satisfies the following properties \cite{FG}.

\begin{proposition}
  \label{weight}
  Let $f\colon X\to Y$ be a map, and let $u\in H^*(Y)$ be nontrivial.

  \begin{enumerate}
    \item There are inequalities
    \[
      \wgt_f(u)\le|u|\quad\text{and}\quad\wgt_f(u)\le\secat(f).
    \]

    \item For any map $g\colon Z\to Y$,
    \[
      \wgt_f(u)\le\wgt_{g^*f}(g^*(u)).
    \]

    \item If $u=u_1\cdots u_n$, then
    \[
      \wgt_f(u)\ge\wgt_f(u_1)+\cdots+\wgt_f(u_n).
    \]
  \end{enumerate}
\end{proposition}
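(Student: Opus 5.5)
The three assertions of Proposition \ref{weight} come from the definition of $\wgt_f(u)$, the maximal $n$ such that $g^*(u)=0$ for every $g\colon Z\to Y$ with $\secat(g^*f)<n$. Part (2) is formal and parts (1) and (3) rest on standard facts about sectional category. I would prove them in the order (2), (1), (3).

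For (2), suppose $\wgt_f(u)=m$ and let $h\colon V\to Z$ satisfy $\secat(h^*(g^*f))<m$. Homotopy pullbacks compose, so $h^*(g^*f)\simeq (g\circ h)^*f$ and $\secat((gh)^*f)<m$. The definition of $\wgt_f(u)$ then gives $h^*(g^*u)=(gh)^*u=0$, so $\wgt_{g^*f}(g^*u)\ge m$.

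For the second inequality in (1), put $s=\secat(f)$ and take $g=1_Y$. Then $g^*f=f$ has $\secat<s+1$ and $g^*(u)=u\ne 0$, so $\wgt_f(u)\le s$. For the first inequality, I need a map $g\colon Z\to Y$ with $g^*u\ne0$ and $\secat(g^*f)\le|u|$. I would take the inclusion $g\colon Y^{(|u|)}\to Y$ of the $|u|$-skeleton, after CW approximation. The map $g^*$ is injective in degree $|u|$, so $g^*u\ne0$. Moreover $\secat(g^*f)\le\cat(Y^{(|u|)})\le |u|$, assuming $Y$ is path-connected and $f$ has nonempty fibres (implicitly assumed, since $u\ne0$). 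Hence $\wgt_f(u)<|u|+1$.

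For (3), let $w_j=\wgt_f(u_j)$ and let $g\colon Z\to Y$ satisfy $\secat(g^*f)<w_1+\cdots+w_n$. The core of the argument is the subadditivity of sectional category over open covers. Write $Z=U_0\cup\dots\cup U_s$ with $s<\sum w_j$ and local homotopy sections on each $U_i$. Group these sets into consecutive blocks $A_1,\dots,A_n$, where block $A_j$ contains at most $w_j$ of the $U_i$, so that $Z=A_1\cup\cdots\cup A_n$. On each $A_j$ the pullback of $f$ has $\secat<w_j$, so $(g|_{A_j})^*u_j=0$. Hence $g^*u_j$ lifts to relative cohomology $H^*(Z,A_j)$. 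The relative cup product $H^*(Z,A_1)\otimes\cdots\otimes H^*(Z,A_n)\to H^*(Z,\bigcup_j A_j)=H^*(Z,Z)=0$ then forces $g^*u=g^*u_1\cdots g^*u_n=0$. I expect this step to be the main obstacle. The relative cup product needs excisive couples, which open sets provide. The union of at most $w_j$ opens with local sections also needs care, since it may not itself admit a section; this is harmless because only $\secat<w_j$ is used on $A_j$, which holds by restricting the cover. Finally, the restriction $g|_{A_j}$ factors through $A_j\to Z\to Y$, so the definition of $w_j$ applies to the map $g|_{A_j}\colon A_j\to Y$.
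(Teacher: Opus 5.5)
Your argument is correct. The paper gives no proof here and simply cites Farber--Grant \cite{FG}; your proof is the standard one. Part (2) and $\wgt_f(u)\le\secat(f)$ are formal, the degree bound comes from the $|u|$-skeleton, and (3) is the relative cup product argument over an open cover grouped into blocks.

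One correction: nonempty fibres of $f$ (with $Y$ path-connected) is a genuine extra hypothesis needed for $\wgt_f(u)\le|u|$, not something that follows from $u\neq0$. For $X=\emptyset$, every $g$ with $Z\neq\emptyset$ has $\secat(g^*f)=\infty$, so $\wgt_f(u)=\infty$.
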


Let $\Delta_X\colon X\to X\times X$ denote the diagonal map. 


\begin{lemma}
  \label{weight naturality}
  Let $f\colon X\to Y$ be a map, and let $u\in H^*(Y\times Y)$ be nontrivial. Then
  \[
    \wgt_{\Delta_Y}(u)\le\wgt_{\Delta_X}((f\times f)^*(u)).
  \]
\end{lemma}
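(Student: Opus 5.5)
The plan is to deduce the inequality from Proposition \ref{weight}(2) applied to the map $g=f\times f\colon X\times X\to Y\times Y$. That proposition gives
\[
  \wgt_{\Delta_Y}(u)\le\wgt_{(f\times f)^*\Delta_Y}((f\times f)^*(u)),
\]
where $(f\times f)^*\Delta_Y$ denotes the homotopy pullback of $\Delta_Y$ along $f\times f$. It then remains to compare the weight taken with respect to this pulled-back map with the weight taken with respect to $\Delta_X$. If $(f\times f)^*(u)=0$, we take the weight to be $\infty$ by convention, and the inequality is trivial, so we may assume $(f\times f)^*(u)\ne 0$.

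For the comparison, the key observation is that $\Delta_X$ factors through the pullback. We have $(f\times f)\circ\Delta_X=\Delta_Y\circ f$, so there is a map over $X\times X$ from $X$, via $\Delta_X$, to the homotopy pullback $P\to X\times X$ of $\Delta_Y$. We then verify directly from the definition of sectional category weight that if a map $\phi\colon A\to B$ over $X\times X$ exists between maps $a\colon A\to X\times X$ and $b\colon B\to X\times X$, then $\wgt_b(v)\le\wgt_a(v)$ for every class $v$. To see this, let $n=\wgt_b(v)$, and take any $h\colon Z\to X\times X$ with $\secat(h^*a)<n$. The induced map $h^*\phi\colon h^*A\to h^*B$ over $Z$ turns any local homotopy section of $h^*a$ into one of $h^*b$, so $\secat(h^*b)\le\secat(h^*a)<n$. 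Hence $h^*(v)=0$, and therefore $\wgt_a(v)\ge n$.

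Applying this with $a=\Delta_X$, $b=P\to X\times X$ and $v=(f\times f)^*(u)$ gives
\[
  \wgt_{(f\times f)^*\Delta_Y}((f\times f)^*(u))\le\wgt_{\Delta_X}((f\times f)^*(u)),
\]
and chaining this with the first inequality proves the lemma.

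The main obstacle is the monotonicity of $\secat$ under a map over the base, together with its compatibility with homotopy pullbacks. One has to check that pulling back along $h$ preserves the map between the total spaces, and that homotopy sections compose correctly with it. This should be routine if we work with fibrational replacements throughout, so that homotopy pullbacks become strict pullbacks.
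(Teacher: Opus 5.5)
Your proof is correct, but it takes a different route from the paper's.

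The paper relies on Grant's characterization $\wgt_g(x)\ge k \iff g(k)^*(x)=0$, where $g(k)$ is the $k$-fold fiberwise join of $g$. It then notes that the square $(f\times f)\circ\Delta_X=\Delta_Y\circ f$ induces $\tilde f\colon X(k)\to Y(k)$ over $f\times f$. So vanishing of $u$ on $Y(k)$ forces vanishing of $(f\times f)^*(u)$ on $X(k)$.

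You instead work directly from the definition of weight. You split the comparison into two pieces:
\begin{itemize}
\item pullback naturality (Proposition \ref{weight}(2));
\item a monotonicity principle: a map $A\to B$ over the base carries local homotopy sections of $h^*a$ to local homotopy sections of $h^*b$, so $\secat(h^*b)\le\secat(h^*a)$ for every test map $h$, and hence $\wgt_b\le\wgt_a$.
\end{itemize}
You apply the second piece to the lift $x\mapsto(x,x,\mathrm{const}_{f(x)})$ of $\Delta_X$ into the path-space model $P$ of the homotopy pullback. This lift commutes strictly over $X\times X$, so the ``obstacle'' you flag is indeed routine. The two steps can even be merged: for $h\colon Z\to X\times X$, the induced map over $Z$ gives $\secat(((f\times f)\circ h)^*\Delta_Y)\le\secat(h^*\Delta_X)$ directly.

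The paper's route is shorter. The fiberwise join is a single universal test object, which turns the quantifier over all test maps into one cohomological vanishing condition, so naturality reduces to functoriality of the join. Your route is self-contained and avoids the join characterization altogether. That characterization's converse direction rests on Schwarz's theorem, and thus on partitions of unity over the test spaces.
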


\begin{proof}
  Let $g\colon Z\to W$ be a map, and let $g(k)\colon Z(k)\to W$ denote its $k$-fold fiberwise join. By \cite[Proposition 1]{Gr3}, for every nontrivial class $x\in H^*(W)$, we have $\wgt_g(x)\ge k$ if and only if $g(k)^*(x)=0$. Hence it suffices to show that for all $k\ge 1$, if $\Delta_Y(k)^*(u)=0$, then $\Delta_X(k)^*((f\times f)^*(u))=0$. Indeed, this immediately follows from the commutative diagram
  \[
    \xymatrix{
      X(k)\ar[r]^{\tilde{f}}\ar[d]_{\Delta_X(k)}&Y(k)\ar[d]^{\Delta_Y(k)}\\
      X\times X\ar[r]^{f\times f}&Y\times Y,
    }
  \]
  which shows that $\Delta_X(k)^*((f\times f)^*(u))=\tilde{f}^*(\Delta_Y(k)^*(u))$.
\end{proof}

In this section, we apply Proposition \ref{DTC H<G} to obtain an upper bound for the growth function $\alpha_G(n)$ when $G$ is a $p$-group.

\begin{lemma}
  \label{TC Z_p}
  Let $G$ be a group with central $p$-subgroup. Then for all $n\ge 1$,
  \[
    \TC^n(\Z_p)-1\le\TC^n(G).
  \]
\end{lemma}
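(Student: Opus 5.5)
Since $G$ contains a central $p$-subgroup, and we may assume it is nontrivial, it contains a central subgroup $H\cong\Z_p$. The plan is to compare $B_nH$ with $B_nG$ through $\mathcal{D}$-topological complexity, as in Corollary \ref{DTC Z_2}. The loss of $1$ in the statement will come from Proposition \ref{DTC TC estimate} rather than from Proposition \ref{DTC RP}, since there is no analogue of the latter for $\Z_p$ with $p$ odd.

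The first step is to check that $H$ has diagonal stabilizers. Take $x,y\in H$ and $z\in G$ with $xzy^{-1}=z$. Because $H$ is central, this gives $xy^{-1}z=z$, so $xy^{-1}=1$ and hence $x=y$. This is exactly where centrality is used. Proposition \ref{DTC H<G} then gives
\[
  \TC^\mathcal{D}(B_n\Z_p)\le\TC^\mathcal{D}(B_nG).
\]
Combining this with Proposition \ref{DTC<TC} yields $\TC^\mathcal{D}(B_n\Z_p)\le\TC^n(G)$.

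It remains to compare $\TC^\mathcal{D}(B_n\Z_p)$ with $\TC^n(\Z_p)$. For $n\ge 2$, the group $\Z_p$ is finite and so has infinite cohomological dimension. Proposition \ref{DTC TC estimate} therefore gives $\TC^n(\Z_p)\le\TC^\mathcal{D}(B_n\Z_p)+1$, and hence $\TC^n(\Z_p)-1\le\TC^n(G)$.

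For $n=1$, the remark after Proposition \ref{DTC TC estimate} gives $\TC^1(\Z_p)\le 2$. It also gives $\TC^1(G)\ge 1$, because $|G|\ge p\ge 2$. So the inequality holds in this case as well.

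The argument is mostly bookkeeping. The only points needing care are that the diagonal-stabilizer condition really follows from centrality, and that the $n=1$ case is not covered by Proposition \ref{DTC TC estimate} and must be treated separately.
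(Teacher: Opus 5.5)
Your proof is correct and follows the paper's argument: centrality gives diagonal stabilizers, and the chain $\TC^n(\Z_p)-1\le\TC^\mathcal{D}(B_n\Z_p)\le\TC^\mathcal{D}(B_nG)\le\TC^n(G)$ comes from Propositions \ref{DTC TC estimate}, \ref{DTC H<G} and \ref{DTC<TC}. Your separate treatment of $n=1$ via the remark on $\TC^1$ is a worthwhile addition, since the paper cites Proposition \ref{DTC TC estimate} without noting that it only covers $n\ge 2$.
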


\begin{proof}
  Since $G$ contains a central $p$-subgroup, it contains a central subgroup $H$ isomorphic to $\Z_p$. Every central subgroup of $G$ has diagonal stabilizer. Hence, by Propositions \ref{DTC<TC}, \ref{DTC TC estimate} and \ref{DTC H<G}, we obtain
  \[
    \TC^n(H)-1\le\TC^\mathcal{D}(B_nH)\le\TC^\mathcal{D}(B_nG)\le\TC^n(G).
  \]
  This proves the claim.
\end{proof}

\begin{proposition}
  [{cf. \cite[Theorem 11]{FG2}}]
  \label{Z_p upper bound}
  For any integers $0\le k,l\le n$ with $\binom{k+l}{k}\not\equiv 0\mod p$, we have
  \[
    \TC^{2n+1}(\Z_p)\ge 2(k+l)+1.
  \]
\end{proposition}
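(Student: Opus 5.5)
We will bound $\TC^{2n+1}(\Z_p)=\TC(B_{2n+1}\Z_p)$ from below using the sectional category weight. Proposition~\ref{weight} gives $\TC(X)=\secat(\Delta_X)\ge\wgt_{\Delta_X}(u)$ for every nontrivial $u\in H^*(X\times X;\Z_p)$. So it suffices to produce a nonzero class in $H^*(B_{2n+1}\Z_p\times B_{2n+1}\Z_p;\Z_p)$ of weight at least $2(k+l)+1$.

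\textbf{The cohomology of the Milnor construction.} We take $p$ odd; for $p=2$ the same argument runs with $\R P^{2n+1}$ and the generator $x$ of degree one. The space $B_{2n+1}\Z_p$ is homotopy equivalent to the lens space $L^{2n+1}$, since $E_{2n+1}\Z_p$ is a $2n$-connected free $\Z_p$-complex of dimension $2n+1$. Hence
\[
H^*(B_{2n+1}\Z_p;\Z_p)=\Lambda(x)\otimes\Z_p[y]/(y^{n+1}),\qquad |x|=1,\ |y|=2,\ y=\beta x.
\]
Consider the zero-divisors $\bar x=x\otimes1-1\otimes x$ and $\bar y=y\otimes 1-1\otimes y$. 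Each has $\Delta^*=0$, so its $\Delta$-weight is at least $1$.

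\textbf{Key step: $\wgt_\Delta(\bar y)\ge 2$.} This is the step borrowed from Farber--Grant \cite[Theorem 11]{FG2}. The class $\bar y$ is the image under the Bockstein of $\bar x$, which is a zero-divisor of degree one. By the Farber--Grant result on stable cohomology operations (Bocksteins) applied to zero-divisors, $\wgt_\Delta(\beta\bar x)\ge 2$. If this cannot be cited directly, we would argue through Lemma~\ref{weight naturality} and the fiberwise-join criterion of \cite[Proposition 1]{Gr3}. The idea is to pull $\bar y$ back along $\Delta(1)$, the fiberwise join of the diagonal, and show it vanishes there. For this, $\bar x$ vanishes on $\Delta(1)$ together with a cochain-level null-homotopy, and this gives a vanishing of its Bockstein. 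We expect this to be the main obstacle.

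\textbf{Producing the class.} We take
\[
u=\bar x\,\bar y^{\,k+l}.
\]
By Proposition~\ref{weight}(3),
\[
\wgt_\Delta(u)\ge 1+2(k+l).
\]
It remains to check that $u\neq0$. Expand $\bar y^{k+l}$ binomially. Because $y^{n+1}=0$ and $k,l\le n$, the coefficient of $y^k\otimes y^l$ is $\pm\binom{k+l}{k}$, which is nonzero mod $p$ by hypothesis. Multiplying by $\bar x$ gives the term
\[
\pm\binom{k+l}{k}\,x y^k\otimes y^l
\]
in $u$. No other monomial in the expansion of $u$ can cancel it, because the tensor basis of $\Lambda(x)\otimes\Z_p[y]/(y^{n+1})$ distinguishes monomials by which factor carries the $x$. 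Hence $u\ne0$.

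Combining the two facts gives
\[
\TC^{2n+1}(\Z_p)\ge\wgt_\Delta(u)\ge 2(k+l)+1.
\]
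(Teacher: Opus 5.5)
Your architecture matches the paper's: the Farber--Grant bound $\wgt(\bar y)\ge 2$, superadditivity for $\bar x\,\bar y^{k+l}$, and the coefficient $\pm\binom{k+l}{k}$ of $xy^k\otimes y^l$ to show the class is nonzero. But the step you hang everything on is false for odd $p$: $B_{2n+1}\Z_p$ is \emph{not} homotopy equivalent to $L^{2n+1}$.

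For $n\ge 1$ its universal cover is $E_{2n+1}\Z_p$, the $(2n+2)$-fold join of a $p$-point set. That join is a wedge of $(p-1)^{2n+2}$ copies of $S^{2n+1}$, not $S^{2n+1}$. Equivalently, $\chi(B_{2n+1}\Z_p)=(1-(p-1)^{2n+2})/p\neq 0=\chi(L^{2n+1})$, and already $B_1\Z_3\simeq S^1\vee S^1$. Being a $2n$-connected free $\Z_p$-complex of dimension $2n+1$ only tells you that $i\colon B_{2n+1}\Z_p\to B\Z_p$ is a $(2n+1)$-equivalence (Lemma~\ref{Ganea fibration}).

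Consequently $H^{2n+1}(B_{2n+1}\Z_p;\Z_p)$ has dimension $(p-1+(p-1)^{2n+2})/p>1$. Your description of the cohomology ring is wrong in the top degree, and nothing proved for $L$ transfers to $B_{2n+1}\Z_p$ by homotopy invariance. For $p=2$ your argument is fine as written, since $B_{2n+1}\Z_2=\R P^{2n+1}$.

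The repair is what the paper does. Let $x,y$ be the pullbacks along $i$ of the generators in degrees $1$ and $2$, so $y=\beta x$ by naturality. Since $i^*$ is injective in degrees $\le 2n+1=\dim B_{2n+1}\Z_p$, the monomials $x^\epsilon y^j$ ($\epsilon\in\{0,1\}$, $0\le j\le n$) are nonzero and linearly independent, and $y^{n+1}=0$. That is all your K\"unneth coefficient argument uses, so it survives.

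For the weight you need an actual comparison map. The paper factors $i$ up to homotopy through $f\colon B_{2n+1}\Z_p\to L$ by cellular approximation, then pulls back $\wgt_{\Delta_L}(\bar v)\ge 2$ from \cite{FG2} via Lemma~\ref{weight naturality}.

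Alternatively, you can prove $\wgt(\bar y)\ge 2$ directly on $B_{2n+1}\Z_p$. Suppose $Z=U_0\cup U_1$ and the two coordinates of $g\colon Z\to B_{2n+1}\Z_p\times B_{2n+1}\Z_p$ are homotopic on each $U_j$. Then $g^*\bar x$ is the Mayer--Vietoris coboundary of a class in $H^0(U_0\cap U_1;\Z_p)$. Since $\beta$ anticommutes with that coboundary and kills $H^0$, you get $g^*\bar y=0$.

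This argument uses $|x|=1$ essentially. A blanket claim that a stable operation applied to a zero-divisor has weight $2$ is false: apply $Sq^1$ to $\overline{\sigma x}\in H^2(\Sigma\R P^2\times\Sigma\R P^2;\Z_2)$. Since $\overline{\sigma x}\cdot Sq^1\overline{\sigma x}\neq 0$, the claim would force $\TC(\Sigma\R P^2)\ge 3>2\cat(\Sigma\R P^2)$. So whatever general result you cite must use that $x$ has degree one.

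Finally, your fallback is off by one in the indexing of Lemma~\ref{weight naturality}. Weight $\ge 2$ means vanishing on the $2$-fold fiberwise join, whereas every zero-divisor already vanishes on $\Delta(1)=\Delta$.
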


\begin{proof}
  The claim is trivial for $n=0$, so we assume $n\ge 1$. Let $L$ denote the $(2n+1)$-dimensional mod $p$ lens space. Its mod $p$ cohomology is given by
  \[
    H^*(L;\Z_p)=\varDelta(u)\otimes\Z_p[v]/(v^{n+1}),\quad\beta u=v,
  \]
  where $|u|=1$ and $|v|=2$. Here, $\varDelta(u)=\Lambda(u)$ for $p$ odd, while for $p=2$, $\varDelta(u)$ is isomorphic to $\Lambda(u)$ as a module, with $u^2=v$. It is shown in \cite{FG2} that
  \[
    \wgt_{\Delta_L}(v\times 1-1\times v)\ge 2.
  \]
  Regard $B\Z_p$ as the infinite dimensional lens space. Then $L$ is the $(2n+1)$-skeleton of $B\Z_p$, and consequently the inclusion $i\colon B_{2n+1}\Z_p\to B\Z_p$ factors as the composite
  \[
    B_{2n+1}\Z_p\xrightarrow{f}L\xrightarrow{j}B\Z_p,
  \]
  up to homotopy, where the map $j$ is the inclusion. The map $j$ induces an isomorphism in mod $p$ cohomology of dimension $\le 2n+1$. By Lemma \ref{Ganea fibration}, there is a homotopy fibration
  \[
    E_{2n+1}\Z_p\to B_{2n+1}\Z_p\xrightarrow{i}B\Z_p.
  \]
  Since $E_{2n+1}\Z_p$ is homotopy equivalent to a wedge of $S^{2n+1}$, it follows that $i$ is an $(2n+1)$-equivalence. In particular, $i$ induces an injection in mod $p$ cohomology of dimension $\le 2n+1$. Since both $L$ and $B_{2n+1}\Z_p$ are $(2n+1)$-dimensional, it follows that the map $f$ induces an injection in mod $p$ cohomology. Therefore $f^*(v)\ne 0$, and by Lemma \ref{weight naturality},
  \[
    \wgt_{\Delta_{B_{2n+1}\Z_p}}(f^*(v)\times 1-1\times f^*(v))\ge\wgt_{\Delta_L}(v\times 1-1\times v)\ge 2.
  \]
  Now suppose that $0\le k,l\le n$ satisfy $\binom{k+l}{k}\not\equiv 0\mod p$. Let
  \[
    x=(f^*(u)\times 1-1\times f^*(u))(f^*(v)\times 1-1\times f^*(v))^{k+l}.
  \]
  Then $x$ contains the term $(-1)^l\binom{k+l}{l}f^*(u)f^*(v)^k\times f^*(v)^l$, which is nonzero since $f^*\colon H^*(L;\Z_p)\to H^*(B_{2n+1}\Z_p;\Z_p)$ is injective. Hence $x\ne 0$, and therefore, by Lemma \ref{weight},
  \[
    \wgt_{\Delta_{B_{2n+1}\Z_p}}(x)\ge 2(k+l)+1.
  \]
  Thus, applying Proposition \ref{weight}, the claim follows.
\end{proof}

\begin{remark}
  Let $k=a_0+a_1p+\cdots+a_mp^m$ and $l=b_0+b_1+\cdots+b_mp^m$ be the $p$-adic expansions of nonnegative integers $k$ and $l$. Since $0\le a_i+b_i\le 2p-2$ and $a_i+b_i-p<b_i$ for all $i$, Lucas's theorem implies that $\binom{k+l}{l}\equiv 0\mod p$ whenever $a_i+b_i\ge p$ for some $i$. Consequently, any choice of $k$ and $l$ violating the condition $a_i + b_i\le p-1$ for some $i$ does not contribute to a nontrivial sectional category weight argument. Hence Proposition \ref{Z_p upper bound} yields the best possible lower bound for $\TC^{2n+1}(\Z_p)$ that can be obtained using sectional category weights in the mod $p$ cohomology of a lens space.
\end{remark}

For a positive integer $n$, write its $p$-adic expansion as $n=a_0+a_1p+\cdots+a_mp^m$. Define
\[
  \delta(n)=\min\{k\mid a_i\le\tfrac{p-1}{2}\text{ for all }i\ge k\}.
\]

\begin{proposition}
  \label{p-group}
  Let $G$ be a group with central $p$-subgroup. Then for $n\ge 1$,
  \[
    \TC^{2n+1}(G)\ge 4n-2\sum_{i<\delta(n)}(2a_i-p+1)p^i
  \]
  where $n=a_0+a_1p+\cdots+a_mp^m$ is the $p$-adic expansion of $n$.
\end{proposition}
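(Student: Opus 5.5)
The plan is to combine Lemma \ref{TC Z_p} with Proposition \ref{Z_p upper bound}, choosing the pair $(k,l)$ in Proposition \ref{Z_p upper bound} so that it realizes the stated bound. Lemma \ref{TC Z_p} gives $\TC^{2n+1}(G)\ge\TC^{2n+1}(\Z_p)-1$. Proposition \ref{Z_p upper bound} gives $\TC^{2n+1}(\Z_p)\ge 2(k+l)+1$ for any $0\le k,l\le n$ with $\binom{k+l}{k}\not\equiv 0\bmod p$. Together these yield $\TC^{2n+1}(G)\ge 2(k+l)$. It therefore suffices to exhibit admissible $k,l$ with
\[
  k+l=2n-\sum_{i<\delta(n)}(2a_i-p+1)p^i.
\]

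Write $\delta=\delta(n)$. I take $k=n$, so its $p$-adic digits are $k_i=a_i$. I define $l$ through its digits:
\[
  l_i=a_i\ (i\ge\delta),\qquad l_i=p-1-a_i\ (i<\delta).
\]
Each $l_i$ lies in $\{0,\ldots,p-1\}$, so this is a genuine $p$-adic expansion. The digit sums behave as follows. For $i<\delta$ we have $k_i+l_i=p-1$. For $i\ge\delta$ we have $k_i+l_i=2a_i\le p-1$, by the definition of $\delta$. Hence adding $k$ and $l$ in base $p$ produces no carries. Consequently
\[
  k+l=\sum_{i\ge\delta}2a_ip^i+\sum_{i<\delta}(p-1)p^i,
\]
which rewrites exactly as $2n-\sum_{i<\delta}(2a_i-p+1)p^i$. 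Since there are no carries, Lucas's theorem gives $\binom{k+l}{k}\equiv\prod_i\binom{k_i+l_i}{k_i}\not\equiv 0\bmod p$; this is the same reasoning as in the remark following Proposition \ref{Z_p upper bound}.

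It remains to check $0\le l\le n$. If $\delta=0$ then $l=n$. If $\delta\ge 1$, minimality of $\delta$ forces $a_{\delta-1}>\frac{p-1}{2}$, and therefore $l_{\delta-1}=p-1-a_{\delta-1}<a_{\delta-1}$. Since $l$ and $n$ have the same digits in all positions $\ge\delta$, comparing digits from the top shows $l<n$. This comparison is the only slightly delicate point, and it is why the cutoff $\delta(n)$ is chosen as the minimal index above which all digits are at most $\frac{p-1}{2}$.

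The case $p=2$ is covered by the same argument: there the condition $a_i\le\frac12$ means $a_i=0$. Substituting $k+l$ into $\TC^{2n+1}(G)\ge 2(k+l)$ gives the claimed inequality.
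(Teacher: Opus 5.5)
Your proposal is correct and is essentially the paper's proof: you make the same choice $k=n$ and $l=\sum_{i<\delta(n)}(p-1-a_i)p^i+\sum_{i\ge\delta(n)}a_ip^i$, use Lucas's theorem to get $\binom{k+l}{k}\not\equiv 0 \bmod p$, and then combine Proposition \ref{Z_p upper bound} with Lemma \ref{TC Z_p} at index $2n+1$. You also spell out the no-carry digit computation and the minimality-of-$\delta(n)$ argument for $l\le n$, which the paper treats as clear.
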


\begin{proof}
  Let $k=n$ and
  \[
    l=\sum_{i<\delta(n)}(p-1-a_i)p^i+\sum_{i\ge\delta(n)}a_ip^i.
  \]
  Clearly, $l\le n$. By Lucas's formula, $\binom{k+l}{l}\not\equiv 0\mod p$. Hence, by Proposition \ref{Z_p upper bound},
  \[
    \TC^{2n+1}(\Z_p)\ge 2(k+l)+1=4n+1-2\sum_{i<\delta(n)}(2a_i-p+1)p^i
  \]
  Since $G$ has torsion, it is of infinite cohomological dimension. Hence, by Lemma \ref{TC Z_p}, we obtain $\TC^n(G)\ge\TC^n(\Z_p)-1$. This completes the proof.
\end{proof}

For a positive integer $n$, define $\gamma(n)$ to be the smallest integer $m>\frac{n}{2}$ such that all digits in its $p$-adic expansion are even.

\begin{corollary}
  \label{gamma}
  Let $G$ be a group with central $p$-subgroup. Then
  \[
    \alpha_G(n)\le\gamma(n).
  \]
\end{corollary}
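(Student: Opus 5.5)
By Theorem \ref{main 1}, the sequence $\TC^k(G)$ is weakly increasing in $k$. This applies because $G$ contains a nontrivial finite subgroup, so $\cd(G)=\infty$. Hence $\alpha_G(n)\le K-1$ as soon as we find a single index $K$ with $\TC^K(G)>n$: every $k$ with $\TC^k(G)\le n$ must then satisfy $k<K$. The plan is to take $K=\gamma(n)+1$ and show $\TC^{\gamma(n)+1}(G)\ge 2\gamma(n)$. Since $\gamma(n)>\frac n2$ is an integer, $2\gamma(n)\ge n+1>n$, which gives the claim.

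Put $m=\gamma(n)$. I assume $p$ is odd, since for $p=2$ no positive integer has all binary digits even, so $\gamma$ is only meaningful for odd $p$. The first step is to observe that $m$ is even. Indeed $p^i\equiv 1\bmod 2$, so $m\equiv\sum_i a_i\equiv 0\bmod 2$ because all digits $a_i$ of $m$ are even. Write $m=2N$, so that $m+1=2N+1$. The $p$-adic digits of $N$ are $a_i/2\le\frac{p-1}{2}$.

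The second step applies Proposition \ref{Z_p upper bound} directly, rather than Proposition \ref{p-group}, with $k=l=N$. Clearly $0\le k,l\le N$. Adding $N$ to itself digitwise produces the digits $a_i\le p-1$ with no carries. Lucas's theorem then gives
\[
\binom{2N}{N}\equiv\prod_i\binom{a_i}{a_i/2}\not\equiv 0\bmod p.
\]
Proposition \ref{Z_p upper bound} therefore yields $\TC^{2N+1}(\Z_p)\ge 4N+1=2m+1$.

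The third step is Lemma \ref{TC Z_p}, which gives
\[
\TC^{m+1}(G)\ge\TC^{m+1}(\Z_p)-1\ge 2m\ge n+1.
\]
Monotonicity then gives $\alpha_G(n)\le m=\gamma(n)$.

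The only delicate points are the parity observation that makes $m/2$ an integer with small digits, and the no-carry Lucas computation. Everything else is a direct chaining of Proposition \ref{Z_p upper bound}, Lemma \ref{TC Z_p} and Theorem \ref{main 1}.
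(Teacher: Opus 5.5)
Your proof is correct and follows the paper's route. The paper cites Proposition \ref{p-group} at $N=\gamma(n)/2$; there $\delta(N)=0$, which reduces to exactly your choice $k=l=N$ in Proposition \ref{Z_p upper bound} followed by Lemma \ref{TC Z_p}, and monotonicity from Theorem \ref{main 1} then finishes. Your parity argument that $\gamma(n)$ is even, and your remark that $\gamma$ only makes sense for odd $p$, make explicit what the paper leaves implicit.
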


\begin{proof}
  By Proposition \ref{p-group}, we have $\TC^{\gamma(n)+1}(G)\ge 2\gamma(n)>n$. Hence, by Theorem \ref{main 1}, it follows that
  \[
    \alpha_G(n)\le 2|\{k\ge 0\mid\TC^{2k+1}(G)\le n\}|\le\gamma(n).
  \]
  This proves the claim follows.
\end{proof}

Let $G$ be a group with central $p$-subgroup. Unlike the case of finite groups of even order, Corollary \ref{gamma} does not determine the asymptotic behavior of the growth function $\alpha_G(n)$. Indeed, if $n=2p^k$ and $p$ is odd, then
\[
  \gamma(n)=n.
\]
On the other hand, if $n=3+4(p+\cdots+p^k)$ and $p\ge 5$, then
\[
  \gamma(n)=\frac{n+1}{2}.
\]
This discrepancy reflects the fact that Proposition \ref{Z_p upper bound} is, in general, coarser than \eqref{Z_2 upper bound}.


\section{Further problems}

In this section, we pose six problems concerning topological complexity sequences and their growth.

We proved in Theorem \ref{main 1} that the topological complexity sequence of every group of infinite cohomological dimension is weakly increasing. In our proof, the assumption that the group has infinite cohomological dimension is essential. However, the monotonicity of the topological complexity sequence itself may also hold for other classes of groups, which leads us to the following problem.

\begin{problem}
  Is the topological complexity sequence weakly increasing for every group?
\end{problem}

In Section \ref{Groups with nontrivial center}, we obtain a lower bound for $\TC^n(G)$ when $G$ contains a central $p$-subgroup. However, this bound does not determine the asymptotic behavior of the growth function $\alpha_G(n)$. Moreover, the method relies essentially on the existence of a central $p$-subgroup, and therefore does not apply to all finite groups of odd order. Hence, it is natural to pose the following problems.

\begin{problem}
  Is there an upper bound for the growth function $\alpha_G(n)$ for every group $G$ of odd order that is sharper than \eqref{growth estimate}?
\end{problem}

\begin{problem}
  Does Theorem \ref{main 2} hold for every nontrivial group of odd order?
\end{problem}

In this paper, we do not address the rate of the growth of topological complexity sequences; that is, we do not estimate the difference $\alpha_G(n+1)-\alpha_G(n)$. This leads us to the following problem.

\begin{problem}
  Does there exist a group $G$ of infinite cohomological dimension such that the sequence $\{\alpha_G(n+1)-\alpha_G(n)\}_{n\ge 1}$ is bounded?
\end{problem}

There are several variants of topological complexity, and it would be interesting to study the corresponding topological complexity sequences. Rudyak \cite{R} introduced sequential topological complexity, motivated by motion planning problems involving several intermediate points. The $r$-th \emph{sequential topological complexity} of a space $X$, denoted by $\TC_r(X)$, is defined as the sectional category of the diagonal map $X\to X^r$, where $X^r$ denotes the product of $r$ copies of $X$. For a group $G$, we define the $r$-th \emph{sequential topological complexity sequence} by
\[
  \TC^n_r(G)=\TC_r(B_nG).
\]
Analogously to Corollary \ref{TC^n}, we obtain the estimate
\begin{equation}
  \label{TC_r^n}
  n\le\TC_r^n(G)\le rn
\end{equation}
for all $n\ge 1$ and any group $G$ of infinite cohomological dimension. Consequently, the growth function
\[
  \alpha_G^r\colon\N\to\Z,\quad n\mapsto|\{k\ge 1\mid\TC^k_r(G)\le n\}|
\]
is well defined, and by \eqref{TC_r^n}, it satisfies the estimate
\[
  \left[\frac{n}{r}\right]\le\alpha_G^r(n)\le n.
\]
Thus we pose the following problems.

\begin{problem}
  Is there an estimate for the growth function $\alpha_G^r$ analogous to Theorem \ref{main 2} for a finite group $G$ of even order?
\end{problem}

\begin{problem}
  Does
  \[
    \lim_{n\to\infty}\frac{\alpha_G^r(n)}{n}=\frac{1}{r}
  \]
  hold for every finite group $G$ of even order?
\end{problem}


\begin{thebibliography}{99}

  \bibitem{B} I. Berstein, On the Lusternik-Schnirelmann category of Grassmannians, Math. Proc. Camb. Philos. Soc. \textbf{79} (1976), no. 1, 129-134.

  \bibitem{C} O. Cornea, Cone-length and Lusternik-Schnirelmann category, Topology \textbf{33} (1994), no. 1, 95-111.

  \bibitem{D} D.M. Davis, A strong nonimmersion theorem for real projective spaces, Ann. of Math. (2) \textbf{120} (1984), no. 3, 517-528.

  \bibitem{DL} A. Dold and R. Lashof, Principal fibrations and fibre homotopy equivalence of bundles, Illinois J. Math. \textbf{3} (1959), 285–305.

  \bibitem{D1} A. Dranishnikov, Distributional topological complexity of groups, \texttt{arXiv:2404.03041}.

  \bibitem{D2} A. Dranishnikov, On topological complexity of hyperbolic groups, Proc. Amer. Math. Soc. \textbf{148} (2020), 4547-4556.

  \bibitem{DR} A. Dranishnikov and Y. Rudyak, On the Berstein-Svarc theorem in dimension $2$, Math. Proc. Camb. Philos. Soc. \textbf{146} (2009), no. 2, 407-413.

  \bibitem{DS} A. Dranishnikov and R. Sadykov, The topological complexity of the free product, Math. Z. \textbf{293} (2019), no. 1-2, 407-416.

  \bibitem{EG} S. Eilenberg and T. Ganea, On the Lusternik-Schnirelmann category of abstract groups, Ann. of Math. \textbf{65} (1957), 517-518.

  \bibitem{EFMO} A. Espinosa Baro, M. Farber, S. Mescher and J. Oprea, Sequential topological complexity of aspherical spaces and sectional categories of subgroup inclusions, Math. Ann. \textbf{391} (2025), no. 3, 4555-4605.

  \bibitem{F} M. Farber, Topological complexity of motion planning, Discrete Comput. Geom. \textbf{29} (2003), no. 2, 211-221.

  \bibitem{FG} M. Farber and M. Grant, Symmetric motion planning, In “Topology and robotics” M. Farber et al (eds), Contemp. Math. \textbf{438} (2007), 85-104.

  \bibitem{FG2} M. Farber and M. Grant, Robot motion planning, weights of cohomology classes, and cohomology operations, Proc. Amer. Math. Soc. \textbf{136} (2008), no. 9, 3339-3349.

  \bibitem{FGLO1} M. Farber, M. Grant, G. Lupton, and J. Oprea, Bredon cohomology and robot motion planning, Algebr. Geom. Topol. \textbf{19} (2019), 2023-2059.

  \bibitem{FGLO2} M. Farber, M. Grant, G. Lupton, and J. Oprea, An upper bound for topological complexity, Topology Appl. \textbf{225} (2019), 109-125.

  \bibitem{FM} M. Farber and S. Mescher, On the topological complexity of aspherical spaces, J. Topol. Anal. \textbf{12} (2020), no. 2, 293-319.

  \bibitem{FTY} M. Farber, S. Tabachnikov, and S. Yuzvinsky, Motion Planning in projective spaces, IMRN \textbf{34} (2003), 1853-1870.

  \bibitem{Ga} P. Gajer, Geometry of Deligne cohomology,  Invent. Math. \textbf{127} (1997), no. 1, 155-207.

  \bibitem{G} T. Ganea, Lusternik-Schnirelmann category and strong category, Illinois. J. Math. \textbf{11} (1967), 417-427.

  \bibitem{GCV} J.M. Garc\'{i}a Calcines and L. Vandembroucq, Topological complexity and the homotopy cofibre of the diagonal map, Math. Z. \textbf{274} (2013), no. 1-2, 145-165.

  \bibitem{Gr3} M. Grant, Topological complexity of motion planning and Massey products, In “Algebraic Topology - Old and New: M.M. Postnikov Memorial Conference”, M. Golasi\'{n}ski et al (eds), Banach Center Publ. \textbf{85} (2009), 193-203.

  \bibitem{Gr1} M. Grant, Parametrised topological complexity of group epimorphisms, Topol. Methods Nonlinear Anal. \textbf{60} (2022), no. 1, 287-303.

  \bibitem{Gr2} M. Grant and D. Recio-Mitter, Topological complexity of subgroups of Artin's braid groups, in “Topological Complexity and Related Topics”, M. Grant, G. Lupton and L. Vandembroucq (eds), Contemp. Math. \textbf{702} (2018), 165-176.

  \bibitem{I} N. Iwase, $A_\infty$-method in Lusternik-Schnirelmann category, Topology \textbf{41} (2002), 695-723.

  \bibitem{IM} N. Iwase and Y. Miyata, Topological complexity of $S^3/Q_8$ as fibrewise L-S category, Topol. Methods Nonlinear Anal. \textbf{62} (2023), no. 1, 239-265.

  \bibitem{J} I.M. James, Euclidean models of projective spaces, Bull. London Math. Soc. \textbf{3} (1971), 257-276.

  \bibitem{K1} B. Knudsen, The topological complexity of pure graph braid groups is stably maximal, Forum Math. Sigma \textbf{10} (2022), e93.

  \bibitem{K2} B. Knudsen, On the stabilization of the topological complexity of graph braid groups, J. Appl. Comp. Topology \textbf{10} (2026), Article: 9.

  \bibitem{M} Y. Minowa, On the topological complexity of non-simply connected spaces, \texttt{arXiv:2603.09407}.

  \bibitem{R} Y.B. Rudyak, On higher analogs of topological complexity, Topology Appl. \textbf{157} (2010), no. 5, 916-920.

  \bibitem{S} A. \u{S}varc, The genus of a fibered space, Trudy Moscov. Mat. Obsc. \textbf{10}, \textbf{11} (1961 and 1962), 217-272, 99-126, (in Amer. Math. Soc. Transl. Series 2, \textbf{55} (1966)).
\end{thebibliography}
\end{document}